\def\ruslk{<<}
\def\ruspk{>>}
\newlist{rlist}{enumerate}{3}
\setlist[rlist]{label={\alph*)},before=\raggedright}
\theoremstyle{plain}
\newtheorem*{theorem*}{Теорема}
\newtheorem{lemma}{Лемма}[section]
\newtheorem*{lemma*}{Лемма}
\newtheorem{statement}{Утверждение}[section]
\newtheorem*{statement*}{Утверждение}
\theoremstyle{definition}
\newtheorem*{definition*}{Определение}
\newcommand{\bd}{\rlap{$\skew4\bar{\phantom x}$}d}
\newcommand{\bh}{\hbar}
\newcommand{\tdh}{\rlap{$\tilde{\phantom x}$}h}
\newcommand{\gp}{\operatorname{gp}}
\newcommand{\greq}{\mathrel{\vcenter{\offinterlineskip
\hrule\vskip.562pt\hbox{$\,{\scriptstyle\circ}\,$}\hrule}}}
\def\clk#1{\overline{\,#1\,}^{{}_{\scriptstyle \,k}}}
\newcommand{\norm}{\left\|\boldsymbol\cdot\right\|}
\def\caseref#1{\ref{#1})}
\title{О тождествах в связных топологических группах}
\author{И.Н. Зябрев, Е.А. Резниченко}
\date{}
\begin{document}

\maketitle

В 1957~году Мыцельский в \cite{1} доказал следующий факт: если в 
локально компактной или в абелевой связной группе существует 
окрестность единицы, в которой выполняется какое-либо тождество, то 
оно выполняется и во всей группе. Там же был поставлен следующий 
вопрос: 

Пусть $G$ есть связная топологическая группа, в некоторой 
ок\-рест\-нос\-ти единицы группы $G$ выполняется тождество $x^3\equiv 
1$.  Верно ли, что тогда тождество $x^3 \equiv 1$ выполняется во всей 
группе $G$?

Тот же вопрос ставится и для тождества $gx^2 \equiv x^2g$, где $g$ 
есть фиксированный элемент группы. Платонов в \cite{2} под номером 
2.48 сформулировал следующую обобщенную постановку задачи 
Мыцельского:

Пусть $G$ есть связная топологическая группа, $f$ есть некоторое 
тождество, $V$ --- окрестность единицы группы $G$. Верно ли, что из 
$f_V\equiv 1$ следует $f_G \equiv 1$?

В настоящей работе дается отрицательный ответ на вопрос Платонова, 
точнее, доказана следующая 

\begin{theorem*}
Пусть\/ $n>10^{10}$ --- нечетное число. Существуют связная 
топологическая группа\/ $G$ и окрестность единицы\/ $V$ такие, что 
$$
x^n_V\equiv 1, \qquad  x_G^n \not\equiv 1.
$$
\end{theorem*}

В дальнейшем полагаем везде $n> 10^{10}$ --- нечетное число.

\section{Метрики и сжимающие отображения на свободных группах}
\label{section1}

Пусть $X$ --- множество с отмеченной точкой $e\in X$. Через $F(X, e)$ 
обозначим свободную группу над $X$, в которой $e$ является единицей. 
Положим $\widetilde X = X\cup X^{-1}$. Для каждого $k\in \mathbb N$ 
рассмотрим отображение 
$$
j_k:\widetilde X^k \to F(X, e), 
$$
которое элемент $(x_1, \dots, x_k)$ переводит в слово $x_1\dots x_k$. 
Пусть $F_k(X, e) = j_k(\widetilde X^k)$.

Пусть на $X$ задана некоторая метрика $d$ (в дальнейшем в некоторых 
случаях ее вид будет определен явно). Ее можно продолжить до 
инвариантной метрики $\rho$\label{p2} на всей группе $F(X, e)$ 
(см., например, \cite{3}). Продолжим метрику $d$ с $X$ на 
$\widetilde X$ следующим 
образом\label{p2x}: 
$\bd(x, y) = d(x, y)$, $\bd(x^{-1}, y^{-1}) = d(x, y)$, 
$\bd(x, y^{-1}) = \bd(x^{-1}, y)=d(x, e) + d(e, y)$, 
где $x\in X$, $y\in X$ и $\bd$ есть продолжение метрики $d$. Метрику 
$\rho$ можно задать с помощью некоторой нормы $N$ на группе: $\rho 
(x, y) = N(xy^{-1})$.

\begin{definition*}
Скажем, что перестановка $\alpha \in S_k$ принадлежит множеству 
$\sigma_k$,  если выполнены следующие условия:
\begin{rlist}
\item 
для любого $i\in \{1, \dots, k\}$ \ $\alpha^2(i)=i$; 
\item 
для любых $i$, $j$ таких, что $1\le i<j\le k$, выполняется одна 
из следующих возможностей:
\begin{enumerate}
\item
$\alpha(j)< i$ и $\alpha(j)<\alpha(i) < j$; 
\item
$\alpha(i)> j$ и $i<\alpha(j)<\alpha(i)$; 
\item
$i<\alpha(j)$, $\alpha(i)< \alpha(j)$ и $\alpha(i)<j$;
\item
$\alpha(j)=i$.
\end{enumerate}
\end{rlist}
\end{definition*} 

Пусть $x\in F(X, e)$, $x=x_1\dots x_k$, где $x_i\in \widetilde X$, 
$i= 1, \dots, k$. Введем следующую функцию:
$$
N(x)=N(x_1\dots x_n)=\frac12\min\left\{\sum_{i=1}^k \bd(x_i, 
x_{\alpha(i)}^{-1}), \alpha\in \sigma_k\right\}.\eqno(\star)
$$

Верна следующая 

\begin{lemma}
\label{lemma1.1}
Функция\/ $N(x)$ является инвариантной нормой на группе\/ $F(X, 
e)$\textup, и метрика\/ $\rho$ такая, что\/ $\rho(x, y)=N(x, 
y^{-1})$\textup, является инвариантным продолжением метрики\/ $d$
с\/ $X$ на\/ $F(X, e)$\textup, причем для любой другой инвариантной 
метрики\/ $\rho_1$ на\/ $F(X, e)$ такой, что\/ $\rho_1$ есть 
продолжение метрики\/ $d$\textup, имеем\/ $\rho_1(x, y)\le \rho(x, 
y)$ для любых\/ $x$\textup, $y$ из $F(X, e)$.  
\end{lemma}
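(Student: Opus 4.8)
The plan is to recognize $N$ as a Graev-type norm and to establish the four assertions in the order: (i) $N$ is well defined on the group, i.e. invariant under free reduction of words; (ii) $N$ is a conjugation-invariant pseudonorm; (iii) $N$ is positive-definite and $\rho(x,y)=N(xy^{-1})$ restricts to $d$ on $X$; (iv) $\rho$ dominates every invariant extension $\rho_1$. Throughout, the engine is the combinatorics of $\sigma_k$: the conditions b)(1)--(4) are precisely the statement that the involution $\alpha$, viewed as a chord diagram on $\{1,\dots,k\}$ (with fixed points drawn as loops to the basepoint $e$), is noncrossing/properly nested. Two preliminary facts are needed first: that $\bd$ is a genuine metric on $\widetilde X$ (the two copies $X$, $X^{-1}$ glued at $e$, so that any path between opposite sides passes through $e$), and that $\sigma_k\neq\varnothing$ (the identity involution lies in $\sigma_k$ via case (3)), so that the minimum in $(\star)$ is over a nonempty finite set and $N$ is at least well defined on each fixed word.

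I would begin with the maximality statement, since it is the cleanest and isolates the role of $\sigma_k$. Let $\rho_1$ be an invariant extension of $d$ and $N_1(g)=\rho_1(g,e)$ the associated invariant norm. The key elementary observation is that $\rho_1$ restricted to $\widetilde X$ is dominated by $\bd$: for $a,c\in\widetilde X$ on the same side one has $\rho_1(a,c)=\bd(a,c)$ by the extension property (and, on the $X^{-1}$ side, conjugation invariance), while for $a,c$ on opposite sides $\bd(a,c)=\rho_1(a,e)+\rho_1(e,c)\ge\rho_1(a,c)$ by the triangle inequality. Hence $N_1(ab)=\rho_1(a,b^{-1})\le\bd(a,b^{-1})$ for all $a,b\in\widetilde X$. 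Now fix a word $x=x_1\dots x_k$ and any $\alpha\in\sigma_k$. Because $\alpha$ is noncrossing it always possesses either an adjacent matched pair $\alpha(i)=i+1$ or a fixed point; peeling it off and using invariance in the form $N_1(u\,x_ix_{i+1}\,v)\le N_1(uv)+N_1(x_ix_{i+1})$, respectively $N_1(u\,x_i\,v)\le N_1(uv)+N_1(x_i)$ (write $x=(uv)\cdot(v^{-1}\,(\text{peeled part})\,v)$ and conjugate the tail back), reduces the length while paying at most $\bd(x_i,x_{i+1}^{-1})$, respectively $N_1(x_i)=d(x_i,e)=\tfrac12\bd(x_i,x_i^{-1})$. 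Induction on $k$ gives $N_1(x)\le\tfrac12\sum_i\bd(x_i,x_{\alpha(i)}^{-1})$, and minimizing over $\alpha\in\sigma_k$ yields $N_1\le N$, that is $\rho_1\le\rho$.

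For the norm axioms I would argue as follows. Positivity: if $N(x)=0$ for a reduced word then some $\alpha\in\sigma_k$ makes every summand vanish; since $\bd$ is a metric this forces every matched pair to be mutually inverse and forbids nontrivial fixed points, and noncrossedness then exhibits an adjacent cancelling pair, contradicting reducedness unless $x=e$. The extension property $N(xy^{-1})=d(x,y)$ for $x,y\in X$ is a direct two-letter computation (matching the two letters costs $d(x,y)$, leaving them as fixed points costs $d(x,e)+d(e,y)\ge d(x,y)$). Symmetry $N(x^{-1})=N(x)$ holds because reversing a word and inverting its letters carries $\sigma_k$ to itself and preserves each summand. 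Subadditivity $N(xy)\le N(x)+N(y)$ follows because the block-diagonal juxtaposition of an optimal $\alpha$ for $x$ and an optimal $\beta$ for $y$ again lies in $\sigma_{k+m}$, and conjugation invariance is obtained similarly by inserting the conjugating letters as nested matched pairs. All of this, however, presupposes that $N$ is constant on each free-equivalence class, i.e. that inserting or deleting a pair $aa^{-1}$ does not change $N$.

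That last point, reduction-invariance, is where I expect the main difficulty to lie, and it is exactly the uncrossing argument for $\sigma_k$. Inserting $aa^{-1}$ and matching the two new letters to each other at zero cost only gives $N(\text{longer})\le N(\text{shorter})$; for the reverse inequality one must take an optimal $\alpha$ for the longer word, in which the inserted letters $a$ (position $p\!+\!1$) and $a^{-1}$ (position $p\!+\!2$) may be matched to distant letters $x_s,x_t$, and rewire $\alpha$ to match $p\!+\!1\leftrightarrow p\!+\!2$ and $s\leftrightarrow t$. The cost does not increase, by the triangle inequality for $\bd$ together with the symmetry $\bd(p,q^{-1})=\bd(q,p^{-1})$, since $\bd(x_s,x_t^{-1})\le\bd(a,x_s^{-1})+\bd(a^{-1},x_t^{-1})$; but one must verify that the rewired involution still lies in $\sigma_k$, i.e. is still noncrossing and still satisfies b)(1)--(4). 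Checking this closure of $\sigma_k$ under the uncrossing move, together with its closure under restriction to the complement of an adjacent pair (used in the induction above), under juxtaposition, and under reversal, is the genuinely technical combinatorial core of the lemma; the remaining assertions are then bookkeeping built on these closure properties and on the triangle inequality for $\bd$.
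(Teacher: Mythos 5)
Your proposal is correct and follows essentially the same route as the paper: well-definedness of $N$ under insertion/deletion of $aa^{-1}$ via the same rewiring-of-matchings argument (triangle inequality for $\bd$ plus the symmetry $\bd(x_s,x_t^{-1})=\bd(x_t,x_s^{-1})$, with the closure of $\sigma_k$ under the rewiring asserted rather than fully verified, exactly as in the paper), the norm axioms and conjugation invariance by juxtaposition/nesting of matchings, and maximality $N_1\le N$ by induction on word length using the recursive structure of noncrossing involutions. The only (inessential) difference is in the maximality induction, where you peel innermost adjacent pairs or fixed points using conjugation invariance of $N_1$, while the paper splits the word at the position matched to the first letter and handles the case $\alpha(1)=k+1$ by a cyclic rotation --- two implementations of the same idea.
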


\begin{proof}
I) Пусть $x\in F(X, e)$, $x= x_1\dots x_k\greq x'$, $x_i\in 
\widetilde X$ --- несократимая запись слова $x$, а $x=y_1\dots 
y_{k+2m}\greq y'$, $y_j\in \widetilde X$, --- любая другая запись 
слова $x$, где знак $\greq$ обозначает графическое равенство. 
Докажем, что $N(x')=N(y')$. Действительно, слово $y'$ отличается от 
слова $x'$ несколькими вставками вида $yy^{-1}$, $y\in \widetilde X$. 
В силу конечности множества $\sigma_{k+2m}$ существует перестановка 
$\beta \in \sigma_{k+2m}$ такая, что 
$$
N(y') = \frac12\sum_{j=1}^{k+2m}\bd(y_j, y^{-1}_{\beta(j)}).
$$

Пусть $m> 0$. Тогда в слове $y_1\dots y_{k+2m}$ существует $i$ такое, 
что $y_i= y_{i+1}^{-1}$. Пусть $z'=y_1\dots y_{i-1}y_{i+2} \dots 
y_{k+2m}$. Рассмотрим случаи:

\begin{enumerate}
\item\label{caseI}
$\beta(i) = i+1$. Тогда из определения функции $N$ сразу же  
вытекает, что $N(y') = N(z')$.
\item\label{caseII}
$\beta(i) = j$, $j\ne i$, $j \ne i+1$. Имеются две возможности:
\begin{enumerate}
\item\label{casea}
$\beta(i+1) = l\ne i+1$. Рассмотрим перестановку $\beta_1\in 
\sigma_{k+2m}$ такую, что $\beta_1(j)= l$, $\beta_1(i) = i+1$, 
$\beta_1(r) = \beta(r)$, если $r\notin\{j, l, i, i+1\}$. 
Вычислим разность:
\begin{align*}
&N(y')-\frac 12 \sum_{j=1}^{k+2m}\bd (y_j, y_{\beta_1(j)}^{-1}) = \\
&\qquad =\frac12 [\bd(y_i, y_j^{-1})+\bd(y_i, y_j^{-1})+ 
\bd(y_{i+1}, 
y_l^{-1})+ \bd(y_l, y_{i+1}^{-1}) -\\
 &\qquad\qquad- \bd(y_l, y_j^{-1}) - \bd(y_j, 
y_l^{-1}) - \bd(y_i, y_{i+1}^{-1})] =\\
&\qquad=\bd(y_i, y_j^{-1}) + \bd(y_l, y_{i+1}^{-1}) - \bd(y_l, 
y_j^{-1}) =\\ 
&\qquad=\bd(y_l, y_i) + \bd(y_i, y_j^{-1}) - \bd(y_l, 
y_j^{-1}) \ge 0.  
\end{align*}
Значит, $N(y')=\frac12 \sum_{j=1}^{k+2m} \bd(y_j, 
y_{\beta(j)}^{-1})$, и все сводится к случаю~\caseref{caseI}.

\item $\beta(i+1)=i+1$. Рассмотрим перестановку $\beta_2\in 
\sigma_{k+2m}$ такую, что $\beta_2(j)= j$, $\beta_2(i) = i+1$, 
$\beta_2(r) = \beta(r)$, если $r\notin\{i, i+1, j\}$. 
Вычислим разность:
\begin{align*}
&N(y')-\frac 12 \sum_{j=1}^{k+2m}\bd (y_j, y_{\beta_2(j)}^{-1}) =\\
&\qquad=\frac12 [\bd(y_i, y_j^{-1})+\bd(y_j, y_i^{-1})+\bd(y_{i+1}, 
y_{i+1}^{-1}) -\\
&\qquad\qquad- \bd(y_i, y_{i+1}^{-1}) - \bd(y_{i+1}, y_i^{-1}) 
- \bd(y_j, y_j^{-1})] = \\
&\qquad=\frac12[\bd(y_j, y_i^{-1}) + \bd(y_i, y_j^{-1}) + \bd(y_j, 
y_j^{-1}) \ge 0.  
\end{align*}
Дальше доказательство по аналогии со случаем~\caseref{casea}.
\end{enumerate} 
\item
$\beta(i+1)=j$, $j\ne i+1$, $j\ne i$ --- случай аналогичен 
случаю~\caseref{caseII}. 
\item
$\beta(i) = i$, $\beta(i+1) = i+1$. Рассмотрим перестановку 
$\beta_3\in \sigma _{k+2m}$ такую, что $\beta_3(i)= i+1$, $\beta_3(r) 
= \beta(r)$, если $r\notin \{i, i+1\}$. Тогда очевидно, что 
$$
N(y') - \frac 12 \sum_{j=1}^{k+2m}\bd (y_j, y_{\beta_3(j)}^{-1}) \ge 0,
$$
и все опять сводится к случаю~\caseref{caseII}.
\end{enumerate} 

Итак, какой бы вид ни имела перестановка $\beta\in \sigma_{k+2m}$, 
после сокращения $y_iy_{i+1}$ мы получаем слово $z'$ такое, что 
$N(y') = N(z')$. Очевидно, что после $m$ сокращений мы получим, что 
$N(y') = N(x')$. Итак, значение функции $N(x)$ не зависит от записи 
слова $x\in F(X, e)$. Докажем, что функция $N$ --- инвариантная норма 
на группе $F(X, e)$. Свойства нормы очевидны. Инвариантность 
вытекает из следующих неравенств:
$$
N(x) \ge N(gxg^{-1})\ge  N(g^{-1}(gxg^{-1})g) = N(x),
$$
откуда $N(x) = N(gxg^{-1})$ для любого $g\in F(X, e)$.

II) Пусть теперь $\rho_1$ --- некоторое инвариантное продолжение 
метрики $d$ на группу $F(X, e)$. Оно порождает некоторую норму $N_1$: 
$\rho_1(x, y) = N_1(xy^{1})$ для любых $x, y\in F(X, e)$. Для 
завершения доказательства леммы нам достаточно показать, что для 
любого $x\in F(X,e)$ \ $N_1(x)\le N(x)$. Доказательство проведем 
индукцией по длине слова $x$.

При $x\in \widetilde X$ \ $N_1(x)=\rho_1(x, e) =\bd(x, e)= N(x)$. 

При $x, y\in \widetilde X$ \ $N_1(xy^{-1}) = \rho_1(x, y)\le 
\rho_1(x, e) + \rho_1(e, y) = \bd (x, e) + \bd(e, y) = \bd(x, y) = 
N(xy^{-1})$.

Пусть для любого слова $x=x_1\dots x_k$, $x_i\in \widetilde X$, 
$N_1(x) \le N(x)$. Докажем теперь, что и для слов вида $x=x_1\dots 
x_{k+1}$ имеет место то же неравенство: $N_1(x)\le N(x)$. Ввиду 
конечности множества $\sigma_{k+1}$ существует перестановка 
$\alpha\in \sigma_{k+1}$ такая, что 
$$
N(x) = \frac12\sum_{i=1}^{k+1} \bd(y_i, y_{\alpha(i)}^{-1}).
$$
Рассмотрим два случая:
\begin{enumerate}
\item\label{xcasea}
Пусть $\alpha(1) = l < k+1$. Тогда 
\begin{align*}
N(x) &= N(x_1\dots x_{k+1}) =\\
&= N(x_1\dots x_l) + N(x_{l+1}\dots x_{k+1}) \ge\\
&\ge N_1(x_1\dots x_l) + N_1(x_{l+1}\dots x_{k+1}) \ge N_1(x).
\end{align*}
\item
$\alpha(1) = k+1$. Так как $N$, $N_1$ --- инвариантные нормы, то 
\begin{align*}
N_1(x) &= N_1(x_1^{-1}xx_1) = 
N_1(x_2\dots x_{k+1}x_1),\\ 
N(x) &= N(x_2\dots x_{k+1}x_1),
\end{align*}
и доказательство сводится к случаю~\caseref{xcasea}.
\end{enumerate}

Лемма~\ref{lemma1.1} доказана.
\end{proof}

\begin{lemma}\label{lemma1.2}
Пусть задано отображение\/ $h\colon X\to X$ такое, что 
\begin{rlist}
\item
$h(e) = e$\textup;
\item
$h$ является сжимающим отображением, т.е.\ для любых\/ $x, y\in X$ 
имеем\/ $d(h(x), h(y))\le d(x, y)$.
\end{rlist}

Тогда\/ $h$ продолжается до гомоморфизма\/ $\bh\colon F(X,e)\to F(X, 
e)$\textup, и отображение $\bh$ тоже является сжимающим. Если, кроме 
того, существует такая константа\/ $\gamma$\textup, $0<\gamma<1$\textup, 
что\/ $d(h(x), h(y)) = \gamma d(x, y)$\textup, то для любых\/ $x', 
y'\in F(X, e)$ выполняется равенство\/ $\rho(\bh(x'), 
\bh(y'))=\gamma\rho(x', y')$.  
\end{lemma}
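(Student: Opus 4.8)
The plan is to build the homomorphism from the universal property of the free group and then reduce every metric estimate to the explicit norm $N$ from Lemma~\ref{lemma1.1}. First I would extend $h$ from $X$ to $\widetilde X$ by $h(x^{-1}) = h(x)^{-1}$, and then to $F(X,e)$ by $\bh(x_1\dots x_k) = h(x_1)\dots h(x_k)$. Since $h(e)=e$, this respects the only defining relation of the pointed free group, so $\bh$ is a well-defined endomorphism. Because $\bh$ is a homomorphism and $\rho$ is invariant with $\rho(x',y')=N(x'y'^{-1})$, the contraction claim $\rho(\bh(x'),\bh(y'))\le\rho(x',y')$ is equivalent to $N(\bh(w))\le N(w)$ for every $w\in F(X,e)$, and the $\gamma$-statement is equivalent to $N(\bh(w))=\gamma N(w)$; so it suffices to work with $N$ alone.

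The key step I would isolate is that the extended $h$ is $\bd$-contracting on $\widetilde X$: $\bd(h(a),h(b))\le\bd(a,b)$ for all $a,b\in\widetilde X$. This is a short case check against the definition of $\bd$. When $a,b\in X$ it is the hypothesis; when $a=x^{-1},b=y^{-1}$ it follows from $\bd(x^{-1},y^{-1})=d(x,y)$ and $h(x^{-1})=h(x)^{-1}$; in the mixed case $a=x,b=y^{-1}$ one uses $\bd(x,y^{-1})=d(x,e)+d(e,y)$ together with $d(h(x),e)=d(h(x),h(e))\le d(x,e)$ and the analogous bound for $y$, which is exactly where $h(e)=e$ is needed. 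Granting this, let $w=x_1\dots x_k$ and let $\alpha\in\sigma_k$ realize the minimum in $(\star)$ for $w$. Writing $\bh(w)=h(x_1)\dots h(x_k)$ (a representation by letters of $\widetilde X$, possibly unreduced and possibly containing $e$, which is harmless by part I of Lemma~\ref{lemma1.1}) and noting $h(x_{\alpha(i)})^{-1}=h(x_{\alpha(i)}^{-1})$, the same $\alpha$ is admissible for $\bh(w)$, so
$$
N(\bh(w))\le\frac12\sum_{i=1}^{k}\bd\bigl(h(x_i),h(x_{\alpha(i)}^{-1})\bigr)\le\frac12\sum_{i=1}^{k}\bd\bigl(x_i,x_{\alpha(i)}^{-1}\bigr)=N(w).
$$
This proves that $\bh$ is a contraction.

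For the final assertion I would first upgrade the case check to an equality: when $d(h(x),h(y))=\gamma d(x,y)$ on $X$, the same three cases give $\bd(h(a),h(b))=\gamma\,\bd(a,b)$ for all $a,b\in\widetilde X$. The bound $N(\bh(w))\le\gamma N(w)$ then follows exactly as above, now with equalities. The harder direction, which I expect to be the main obstacle, is $N(\bh(w))\ge\gamma N(w)$: here I cannot reuse the permutation optimal for $w$, since $N(\bh(w))$ is a minimum over its \emph{own} admissible permutations. Instead I would pick $\beta\in\sigma_k$ realizing the minimum in $(\star)$ for the length-$k$ representation $h(x_1)\dots h(x_k)$ of $\bh(w)$ and run the multiplicative identity backwards:
$$
N(\bh(w))=\frac12\sum_{i=1}^{k}\bd\bigl(h(x_i),h(x_{\beta(i)}^{-1})\bigr)=\gamma\cdot\frac12\sum_{i=1}^{k}\bd\bigl(x_i,x_{\beta(i)}^{-1}\bigr)\ge\gamma N(w),
$$
the last inequality holding because $\beta\in\sigma_k$ is merely one admissible permutation for $w$, whereas $N(w)$ is the minimum over all of them. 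The one delicate point is that both computations of $N$ are carried out on the length-$k$ (generally unreduced) representations, which is precisely what part I of Lemma~\ref{lemma1.1} licenses. Combining the two inequalities gives $N(\bh(w))=\gamma N(w)$, hence $\rho(\bh(x'),\bh(y'))=\gamma\rho(x',y')$.
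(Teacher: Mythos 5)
Your proposal is correct and follows essentially the same route as the paper: extend $h$ to $\widetilde X$ via $\tdh(x^{-1})=(h(x))^{-1}$, check that $\tdh$ is $\bd$-contracting (with equality up to $\gamma$ in the second case), and then read the conclusion off formula~$(\star)$. The only difference is one of detail: the paper merely asserts that $N(\bh(x))\le N(x)$ (resp.\ $N(\bh(x))=\gamma N(x)$) follows from $(\star)$, whereas you spell out both directions of the $\gamma$-equality by comparing optimal permutations for $w$ and for $\bh(w)$ --- a welcome filling-in of a step the paper leaves implicit, not a different argument.
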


\begin{proof}
Отображение $h$ продолжается до отображения $\tdh\colon 
\widetilde X\to \widetilde X$: $\tdh(x^{-1}) = (h(x))^{-1}$, где 
$x\in X$. Тогда положим 
$$
\bh (x_1 \dots x_k) = \tdh(x_1)\dots \tdh(x_k).
$$ 
Очевидно, что отображение $\tdh$ является сжимающим 
относительно метрики $\bd$, а если, кроме того, $d(h(x), h(y)) = 
\gamma d(x, y)$ для $x, y\in X$, то и $\bd(\tdh(x), \tdh(y)) = 
\gamma\bd(x, y)$ для $x, y\in \widetilde X$. Из того, что $\tdh$ 
сжимающее и из формулы~$(\star)$ для функции 
$N$ вытекает, что для $x\in F(X, e)$ выполняется неравенство 
$$ 
N(\bh(x)) \le N(x) \qquad  (N(\bh(x))=\gamma N(x)).  
$$

Лемма~\ref{lemma1.2} доказана.
\end{proof}

\begin{lemma}
\label{lemma1.3}
На отрезке\/ $[0, 1]$ введем метрику\/ $d$\textup: $d(x, y) = |x-y|$. Пусть 
дано конечное множество\/ $Y\subset [0, 1]$\textup, содержащее 
точку\/ $0$\textup, и некоторое сжимающее отображение\/ $h^*\colon 
Y\to [0, 1]$ такое, что\/ $h^*(0) = 0$. Тогда отображение\/ $h^*$ 
продолжается до сжимающего отображения\/ $h\colon [0, 1] \to [0, 1]$.  
\end{lemma}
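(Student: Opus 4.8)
The plan is to construct the extension explicitly by piecewise linear interpolation. First I would enumerate the finite set in increasing order as $Y = \{y_0, y_1, \dots, y_m\}$ with $0 = y_0 < y_1 < \dots < y_m \le 1$; this is possible since $0 \in Y$ and $Y$ is finite. On each segment $[y_{i-1}, y_i]$ (for $i = 1, \dots, m$) I would define $h$ to be the linear function interpolating between the values $h^*(y_{i-1})$ and $h^*(y_i)$ at the endpoints, and on the remaining segment $[y_m, 1]$ I would set $h$ equal to the constant $h^*(y_m)$. By construction $h$ is continuous on $[0,1]$, agrees with $h^*$ on $Y$, and satisfies $h(0) = h^*(0) = 0$.

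Next I would check that $h$ is contracting. On each interpolation segment the slope equals $(h^*(y_i) - h^*(y_{i-1}))/(y_i - y_{i-1})$, whose absolute value is at most $1$ precisely because $h^*$ is contracting, i.e.\ $|h^*(y_i) - h^*(y_{i-1})| \le |y_i - y_{i-1}|$; on $[y_m, 1]$ the slope is $0$. Thus $h$ is piecewise linear with all slopes bounded in absolute value by $1$, and since $h$ is continuous this forces $|h(x) - h(x')| \le |x - x'|$ for all $x, x' \in [0,1]$ (integrate the derivative, or compare across the intervening breakpoints). Hence $h$ is a contracting map.

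Finally I would verify that $h$ takes values in $[0,1]$. Since $h^*(Y) \subset [0,1]$, each value $h(x)$ on $[y_{i-1}, y_i]$ is a convex combination of the two endpoint values $h^*(y_{i-1}), h^*(y_i) \in [0,1]$ and hence lies in $[0,1]$; on $[y_m, 1]$ the value is the constant $h^*(y_m) \in [0,1]$. Therefore $h \colon [0,1] \to [0,1]$ is the desired contracting extension.

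I do not expect a genuine obstacle here: the only points requiring care are confirming that a continuous piecewise linear function whose pieces all have slope at most $1$ in absolute value is globally $1$-Lipschitz, and checking that extending by the constant $h^*(y_m)$ past the largest sample point keeps the range inside $[0,1]$ without increasing the Lipschitz constant. An alternative, even quicker route avoiding the case split is the McShane formula $h(x) = \min_{y \in Y}\{h^*(y) + |x - y|\}$, which is automatically $1$-Lipschitz as a minimum of $1$-Lipschitz functions, restricts to $h^*$ on $Y$, and satisfies $0 \le h(x) \le h^*(0) + x = x \le 1$, so that its range again lands in $[0,1]$.
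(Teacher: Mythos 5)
Your proof is correct and takes essentially the same approach as the paper: the paper's proof also defines $h$ by linear interpolation between the values of $h^*$ at consecutive points of $Y$ and by the constant value $h^*(t_1)$ to the right of the largest point $t_1$ of $Y$, leaving the contraction property as a direct verification, which you carry out explicitly (slopes bounded by $1$ on each piece, then summing across breakpoints). Your alternative via the McShane formula $h(x)=\min_{y\in Y}\bigl\{h^*(y)+|x-y|\bigr\}$ is a valid and slicker route, but the main construction you give is the one in the paper.
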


\begin{proof}
Для $t \in [0, 1]\setminus Y$ определим $h(t)$. Рассмотрим два 
случая: 
\begin{enumerate}
\item
Существует такое $y_1\in Y$, что $t \in (t_1, 1]$ и $[t_1, 1]\cap 
Y=\varnothing$. Тогда $h(t) = h^*(t_1)$.
\item Существуют такие $t_1, t_2\in Y$, что $t\in (t_1, t_2)$ и 
$(t_1, t_2)\cap Y = \varnothing$.  Тогда положим 
$$
h(t) = h^*(t_1)\frac {t-t_1}{t_1 - t_2} + h^*(t_2) 
\frac{t-t_1}{t_2-t_1}.
$$
\end{enumerate}

Непосредственно проверяется, что отображение $h$ является сжимающим. 

Лемма~\ref{lemma1.3} доказана.
\end{proof}

Метрики, аналогичные метрике $\rho$ из леммы~\ref{lemma1.1}, впервые 
рассматривались Граевым в работе~\cite{3}. Поэтому в дальнейшем 
метрику, порожденную нормой $N$ из $(\star)$, будем называть 
продолжением по Граеву. Отметим, что формула $(\star)$ получена 
впервые. 

\begin{lemma}
\label{lemma1.4}
Пусть\/ $G$ --- свободная группа с единицей\/ $e$\textup, $X_1\subset 
G$\textup, $X_2\subset G$\textup, $e\in X_1$\textup, $e\in X_2$ и\/ 
$X_1\setminus \{e\}$\textup, $X_2\setminus \{e\}$ --- базисы в\/ $G$. 
На\/ $X_1$ задана метрика\/ $d_1$\textup, а на\/ $X_2$ --- метрика\/ 
$d_2$.  Инвариантные метрики\/ $\rho_1$ и\/ $\rho_2$  есть 
продолжения по Граеву метрик\/ $d_1$ и\/ $d_2$ соответственно. Если 
ограничения метрик\/ $\rho_2$ на\/ $X_1$ и\/ $\rho_1$ на\/ $X_2$ 
совпадают соответственно с\/ $d_1$ и\/ $d_2$\textup, то метрики\/ 
$\rho_1$ и\/ $\rho_2$ совпадают.  
\end{lemma}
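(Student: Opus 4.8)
The plan is to deduce the equality entirely from the maximality property contained in Lemma~\ref{lemma1.1}, applied symmetrically to the two Graev extensions. The key observation is that Lemma~\ref{lemma1.1} says more than that the Graev extension $\rho$ of a metric $d$ is \emph{an} invariant metric restricting to $d$ on the generating set: it says that $\rho$ is the \emph{largest} such metric, i.e. every invariant metric whose restriction to the generators equals $d$ is pointwise dominated by $\rho$. So I would first record this in the present situation: $\rho_1$ is the maximal invariant metric on $G$ whose restriction to $X_1$ is $d_1$, and $\rho_2$ is the maximal invariant metric on $G$ whose restriction to $X_2$ is $d_2$.

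Next I would feed each metric into the maximality statement belonging to the other one. By hypothesis the restriction of $\rho_2$ to $X_1$ is exactly $d_1$, and $\rho_2$ is invariant (being itself a Graev extension, by Lemma~\ref{lemma1.2} and Lemma~\ref{lemma1.1}); hence $\rho_2$ is an invariant metric on $G$ extending $d_1$, and the maximality of $\rho_1$ yields $\rho_2(x,y)\le\rho_1(x,y)$ for all $x,y\in G$. Symmetrically, the restriction of $\rho_1$ to $X_2$ equals $d_2$ and $\rho_1$ is invariant, so $\rho_1$ is an invariant extension of $d_2$, and the maximality of $\rho_2$ gives $\rho_1(x,y)\le\rho_2(x,y)$ for all $x,y\in G$. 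Combining the two inequalities gives $\rho_1=\rho_2$, which is the assertion.

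There is no genuine computational step in this argument; the whole content is the double use of maximality. The only points that require care are checking that the hypotheses really match the template of Lemma~\ref{lemma1.1}: that each $\rho_i$ is honestly an invariant metric on all of $G$ (guaranteed because it is a Graev extension, so Lemma~\ref{lemma1.1} applies to it) and that its restriction to the \emph{other} basis coincides with the metric prescribed there (this is precisely the compatibility assumption of the statement). I would expect the main thing to verify, if anything feels delicate, to be that the restriction hypothesis is exactly the one needed to invoke maximality — i.e. that ``$\rho_2$ restricts to $d_1$ on $X_1$'' together with invariance of $\rho_2$ is enough to place $\rho_2$ inside the class of metrics dominated by $\rho_1$. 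Once that is granted, the proof is only a few lines.
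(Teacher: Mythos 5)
Your proof is correct and is essentially identical to the paper's own argument: the paper also deduces the lemma by applying the maximality clause of Lemma~\ref{lemma1.1} twice, obtaining $\rho_1\le\rho_2$ and $\rho_2\le\rho_1$ from the fact that each $\rho_i$ is an invariant extension of the other's base metric. (Your passing citation of Lemma~\ref{lemma1.2} for invariance is superfluous --- invariance of a Graev extension is already part of Lemma~\ref{lemma1.1} --- but this does not affect the argument.)
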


\begin{proof}
Как уже отмечалось в лемме~\ref{lemma1.1}, для любого инвариантного 
продолжения $\rho'$ метрики $d$ \ $\rho'(x, y) \le \rho(x, y)$, где 
$x, y\in F(X, e)$, а $\rho(x, y) = N(xy^{-1})$. 

Отсюда получаем, что для любых $x, y\in G$ \  $\rho_1(x, y) \le 
\rho_2(x, y)$ и $\rho_2(x, y) \le \rho_1(x, y)$.

Лемма~\ref{lemma1.4} доказана.
\end{proof}

\section{Построение окрестности единицы}

Произвольному множеству $Y\subset F(X, e)$ сопоставим множество 
$$
[Y]\strut_1=\bigcup \{\clk{F_k(X, e)\cap Y}, k\in \mathbb N\}, 
$$
где $\clk{F_k(X, e)\cap Y}$
есть замыкание множества $F_k(X, e)\cap Y$ в $F_k(X, e)$, причем на 
$F_k(X, e)$ рассматривается топология, индуцированная отображением 
$j_k\colon \widetilde X^k\to F_k(X, e)$, т.е.\ множество $F\subset 
F_k(X, E)$ замкнуто в $F_k(X, e)$ тогда и только тогда, когда 
множество $J_k^{-1}(F)$ замкнуто в $\widetilde X^k$. По 
трансфинитной индукции для каждого $\xi$ определим множества 
$[Y]\strut_\xi$ и $[Y]\strut^*_\xi$. Множество $[Y]\strut_1$ уже определено, положим 
$[Y]\strut^*_1=Y$.  Пусть для трансфинитов $\xi' <\xi$ множества $[Y]\strut_\xi$ 
и $[Y]\strut_\xi^*$ уже определены. Тогда положим $$ [Y]\strut^*_\xi = 
\bigcup\{[Y]\strut_{\xi'}, \xi'<\xi\}, \qquad [Y]\strut_\xi]=[[Y]\strut^*_\xi]\strut_1.  $$

В  работе~\cite{3} доказано следующее 

\begin{statement}
\label{statement2.1}
Пусть\/ $X$ --- компакт. Множество\/ $Y\subset F(X, e)$ замкнуто в 
топологии свободной топологической группы\/ $F(X, e)$ тогда и только 
тогда, когда для любого\/ $k\in \mathbb N$ \ $F_k(X, e)\cap Y$ 
замкнуто в\/ $F_k(X, e)$.  
\end{statement}

Обозначим через $\overline Y$ замыкание множества $Y$ в топологии 
свободной топологической группы. В дальнейшем полагаем $X=[0, 1]$ с 
отмеченной точкой 0. Через $\omega_1$ обозначим первый трансфинит 
такой, что $|\{\xi: \xi<\omega_1\}|=\aleph_1$.

\begin{lemma}
\label{lemma2.2}
Пусть\/ $Y\subset F(X, e)$. Тогда\/ $\overline Y=[Y]\strut^*_{\omega_1}$.
\end{lemma}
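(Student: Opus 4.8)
The plan is to prove the two inclusions $[Y]^*_{\omega_1}\subseteq\overline Y$ and $\overline Y\subseteq[Y]^*_{\omega_1}$ separately, the second being the substantial one. Throughout I write $[\,\cdot\,]_1$ for the one-step operator $B\mapsto\bigcup_k\clk{F_k(X,e)\cap B}$, so that the transfinite scheme is just the iteration of $[\,\cdot\,]_1$ with unions taken at limits.

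For the easy inclusion I would first record that, by Statement~\ref{statement2.1}, each $F_k(X,e)$ is closed in $F(X,e)$ (for every $m$ the set $F_k(X,e)\cap F_m(X,e)$ is closed in $F_m(X,e)$), and consequently the topology that $j_k$ induces on $F_k(X,e)$ coincides with the subspace topology inherited from $F(X,e)$. Hence for any $B\subseteq F(X,e)$ the subspace-closure formula gives $\clk{F_k(X,e)\cap B}=F_k(X,e)\cap\overline{F_k(X,e)\cap B}\subseteq\overline B$, where the latter closure is taken in $F(X,e)$. Starting from $[Y]^*_1=Y\subseteq\overline Y$ and using monotonicity of $[\,\cdot\,]_1$, a routine transfinite induction then yields $[Y]_\xi\subseteq\overline{[Y]^*_\xi}\subseteq\overline Y$ and $[Y]^*_\xi\subseteq\overline Y$ for every $\xi$; in particular $[Y]^*_{\omega_1}\subseteq\overline Y$.

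For the reverse inclusion it suffices to show that $A:=[Y]^*_{\omega_1}$ is closed, since $Y\subseteq A$ then forces $\overline Y\subseteq A$. By Statement~\ref{statement2.1} I must check that $\clk{F_k(X,e)\cap A}\subseteq A$ for every $k$. The decisive structural fact is that $F_k(X,e)$, being a quotient of the metrizable compact space $\widetilde X^k$ under $j_k$, is a sequential space and therefore has countable tightness. Take $p\in\clk{F_k(X,e)\cap A}$. By countable tightness there is a countable set $C\subseteq F_k(X,e)\cap A$ with $p\in\clk C$. Since $A=\bigcup_{\xi<\omega_1}[Y]_\xi$, each point of $C$ lies in some $[Y]_\xi$ with $\xi<\omega_1$; as $C$ is countable, the supremum $\eta$ of these countably many countable ordinals satisfies $\eta<\omega_1$. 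Thus $C\subseteq F_k(X,e)\cap[Y]^*_{\eta+1}$, whence $p\in\clk{F_k(X,e)\cap[Y]^*_{\eta+1}}\subseteq[[Y]^*_{\eta+1}]_1=[Y]_{\eta+1}\subseteq A$. This proves $\clk{F_k(X,e)\cap A}\subseteq A$ for all $k$, so $A$ is closed, and the reverse inclusion follows.

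The main obstacle is precisely the closedness of $A$, and it rests on the interplay of two facts: that the tightness of each level $F_k(X,e)$ is countable, so that membership in a closure is witnessed by a countable subset, and that a countable supremum of countable ordinals stays below $\omega_1$, so that this witnessing subset already appears at a single countable stage $[Y]^*_{\eta+1}$. Establishing the countable tightness of $F_k(X,e)$ — through its description as a sequential quotient of the compact metric space $\widetilde X^k$ — together with the identification of the quotient topology on $F_k(X,e)$ with the subspace topology from $F(X,e)$, is the point that I expect to require the most care; once it is in hand, the ordinal cofinality argument closes the proof cleanly.
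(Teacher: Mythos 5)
Your proposal is correct and is essentially the paper's own argument: both reduce the problem to level-wise closedness via Statement~\ref{statement2.1}, both exploit the fact that closure points in each $F_k(X,e)$ are witnessed by countable subsets, and both conclude with the uncountable-cofinality argument that bounds the countably many stages $\xi_i$ strictly below $\omega_1$. The only (cosmetic) differences are that the paper extracts a convergent sequence from the metrizability of the compactum $F_k(X,e)$ and argues by contradiction, whereas you invoke countable tightness of the sequential quotient and prove directly that $[Y]\strut^*_{\omega_1}$ is closed.
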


\begin{proof}
Очевидно, что $[Y]\strut_1\subset \overline Y$, а значит, для любого 
трансфинита $\xi$ имеем $[Y]\strut_1\subset \overline Y$ и 
$[Y]\strut^*_\xi\subset Y$. В частности, $[Y]\strut^*_{\omega_1}\subset Y$. 
Предположим, что $\overline Y\setminus[Y]\strut^*_{\omega_1}\ne 
\varnothing$. Тогда существует $k\in \mathbb N$ такое, что 
$$
\clk{[Y]\strut^*_{\omega_1} \cap F_k(X, e)} 
\setminus ([Y]\strut^*_{\omega_1} \cap F_k(X, e)) \ne \varnothing $$ 
(это следует из утверждения~\ref{statement2.1}). Пусть 
$$
x\in \clk{[Y]\strut^*_{\omega_1} \cap F_k(X, e)}
\setminus
([Y]\strut^*_{\omega_1} \cap F_k(X, e)).
$$

Пространство $F_k(X, e)$ --- метрический компакт, так как оно 
является непрерывным образом метрического компакта $\widetilde X^k$. 
Из метризуемости пространства $F_k(X, e)$ следует, что существует 
такая последовательность $\eta=\{x_i, i\in \mathbb N\}$, что 
$$
\eta\subset [Y]\strut^*_{\omega_1} \cap F_k(X, e)
$$
и $\eta$ сходится к точке $x$ в пространстве $F_k(X, e)$. Так как 
$[Y]\strut^*_{\omega_1} = \bigcup \{[Y]\strut_\xi: \xi, \omega_1\}$ и 
$\eta\subset [Y]\strut^*_{\omega_1}$, то для каждого $i \in \mathbb N$ 
существует такое $\xi_i<\omega$, что $x_i\in [Y]\strut_{\xi_i}$. Из 
определения трансфинита $\omega_1$ следует, что существует такое 
$\xi< \omega_1$, что для любого $i \in \mathbb N$ \ $\xi_i< \xi$. 
Тогда для любого $i\in \mathbb N$ \ $x \in [Y]\strut^*_\xi$. Из 
определения оператора $[\boldsymbol\cdot]\strut_1$ вытекает, что если 
$\eta\subset Z$, $\eta\subset F(X, e)$ и $\eta$ сходится к точке $x$ 
в пространстве $F_k(X, e)$, то $x \in [Z]\strut_1$. Поэтому 
$$
x\in [[Y]\strut^*_\xi]\strut_1 = [Y]\strut_\xi.
$$
Мы получили, что $x\in [Y]\strut_\xi\subset [Y]\strut^*_{\omega_1}$. 
Противоречие. 

Лемма~\ref{lemma2.2} доказана. 
\end{proof}

На $[0, 1]$ рассмотрим метрику $d$\label{p9}: 
$d(x, y) = |x-y|$. Пусть 
$c\in (0, {+\infty})$, тогда $U_c = \{x\in F(X, e), N(x) < c\}$, 
$$ 
\widetilde H_c=\{x^n, x\in U_c\}, 
\qquad
H_c=\gp\{\widetilde H_c\},
$$      
где $N$ из~$(\star)$.

Подгруппа $H_c$ нормальна, так как $U_c$ инвариантно относительно 
сопряжений. 

\begin{lemma}
\label{lemma2.3}
Рассмотрим гомоморфизм\/ $\bh\colon F(X, e) \to F(X, e)$\textup, являющийся 
сжимающим относительно метрики\/ $\rho$\textup, порожденной нормой\/ 
$N$ из~$(\star)$. Тогда\/ $\bh(H_c)\subset H_c$.  
\end{lemma}

\begin{proof}
Пусть $x\in H_c$. Тогда $x=x_1^n\dots x_k^n$, где для $i\in 
\{1,\dots, k\}$ имеем $N(x_i)<c$. Так как $\bh$ есть гомоморфизм, то 
$\bh(x) = (\bh(x_1))^n\dots (\bh(x_k))^n$. Но $\bh$ --- сжимающее 
отображение, и для $i\in \{1, \dots, k\}$ \ $N(\bh(x_i))\le N(x_i) < 
c$. Значит, и $\bh(x) \in H_c$. 

Лемма~\ref{lemma2.3} доказана. 
\end{proof}

\begin{lemma}
\label{lemma2.4}
Если\/ $0<c_1<c_2< {+\infty}$\textup, то\/ $[H_{c_1}]\strut_1\subset 
H_{c_2}$.  \end{lemma}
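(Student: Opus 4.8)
The plan is to show that $[H_{c_1}]_1 \subset H_{c_2}$ by analyzing each of the components $\clk{F_k(X,e) \cap H_{c_1}}$ that make up the closure operator $[\cdot]_1$, and showing that each such closure lands inside $H_{c_2}$. So I need to take an arbitrary point $x \in [H_{c_1}]_1$, which by definition means $x \in \clk{F_k(X,e) \cap H_{c_1}}$ for some $k \in \mathbb N$, and produce a representation of $x$ witnessing membership in $H_{c_2}$.

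Let me think about what's available. We have $H_c = \gp\{\widetilde H_c\}$ where $\widetilde H_c = \{x^n : x \in U_c\}$ and $U_c = \{x : N(x) < c\}$. So an element of $H_{c_1}$ is a product $x_1^n \cdots x_m^n$ with each $N(x_i) < c_1$. The topology on $F_k(X,e)$ is induced via $j_k$ from $\widetilde X^k = \widetilde X^k$ (a compact metric space since $X = [0,1]$). A point $x$ in the closure $\clk{F_k(X,e) \cap H_{c_1}}$ is a limit (in $F_k$) of a sequence of elements of $F_k(X,e) \cap H_{c_1}$.

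The key technical tool should be Lemma 1.1 together with the metric structure: $\rho(x,y) = N(xy^{-1})$ is the Graev extension, and convergence in $F_k(X,e)$ relates to convergence of the coordinate tuples in $\widetilde X^k$. Here is the approach I'd take. Since $\widetilde X^k$ is a compact metric space, and each element of $F_k(X,e) \cap H_{c_1}$ has a preimage word of length $k$ in $\widetilde X^k$ that is also a product of $n$-th powers from $U_{c_1}$, I expect that one can pass to a convergent subsequence of these length-$k$ words in $\widetilde X^k$. The limiting word will then still be a product of $n$-th powers, but the constituent factors may only satisfy $N(\cdot) \le c_1 < c_2$ in the limit rather than the strict inequality $N(\cdot) < c_1$; this is precisely why the statement uses two radii $c_1 < c_2$ with strict inequality, giving room for the limit. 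The strictness $c_1 < c_2$ is exactly the slack that converts closed-ball membership $N \le c_1$ into open-ball membership $N < c_2$.

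The main obstacle will be the bookkeeping of how the group-word structure of ``product of $n$-th powers'' is preserved under taking limits in the $F_k$-topology. The subtlety is that the number $m$ of $n$-th power factors and the lengths of the individual bases $x_i$ may vary along the approximating sequence, yet the total reduced length stays bounded by $k$ (since we are inside $F_k(X,e)$). So I would argue that, after passing to a subsequence, the combinatorial ``shape'' of the words (which positions form which $n$-th power block, and the factorization pattern) stabilizes, reducing the problem to a fixed finite factorization scheme; then each base converges coordinatewise in $\widetilde X^k$, its image under $j$ converges in the Graev metric, and continuity of $N$ (which follows from it being a norm inducing $\rho$) gives $N(\text{limit base}) \le c_1 < c_2$. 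Assembling these limiting bases yields $x$ as a product of $n$-th powers each with $N < c_2$, i.e. $x \in H_{c_2}$. The careful part is justifying the stabilization of the factorization pattern and ensuring the limit word equals $x$ in $F_k(X,e)$, for which I would lean on Statement 2.1 and the explicit description of the $F_k$-topology as the quotient topology from $\widetilde X^k$.
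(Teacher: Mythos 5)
There is a genuine gap at the heart of your plan, namely the step where you ``pass to a subsequence so that the combinatorial shape of the factorizations stabilizes'' and then let the bases converge. Membership of the approximating elements in $F_k(X,e)\cap H_{c_1}$ bounds only the \emph{reduced} length of each element (by $k$); it bounds neither the number $m$ of $n$-th power factors in a representation $x_1^n\cdots x_m^n$ (extra factors may cancel one another), nor the word lengths of the bases $x_i$ themselves, because $N(x_i)<c_1$ is a norm bound, not a length bound: a word of enormous length whose letters all lie near $0$ has arbitrarily small norm. Thus the factorization data ranges over a set with no compactness whatsoever ($U_{c_1}$ is not compact in $F(X,e)$, and $m$ is unbounded), so there is no subsequence along which the pattern stabilizes and no way to extract convergent bases. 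Convergence in $F_k(X,e)$ lives entirely at the level of reduced words and carries no information about factorizations; the final assembly step of your argument therefore cannot be reached.

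The paper's proof avoids taking limits of factorizations altogether, and this is exactly the idea your proposal is missing. Arguing by contradiction, given $y\in\clk{H_{c_1}\cap F_k(X,e)}\setminus H_{c_2}$ with preimage $\bar y=(y_1,\dots,y_k)$, one picks a \emph{single} sufficiently close point $\bar z\in j_k^{-1}(F_k(X,e)\cap H_{c_1})$ and builds (Lemmas \ref{lemma1.3} and \ref{lemma1.2}) a contracting homomorphism $\bh$ of $F(X,e)$ which sends the letters $\|z_i\|$ exactly to $\gamma\|y_i\|$, so that $\bh(j_k(\bar z))=\bh_\gamma(y)$, where $\bh_\gamma$ is the homomorphism induced by scaling $[0,1]$ by $\gamma$. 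By Lemma \ref{lemma2.3} a contracting homomorphism maps $H_{c_1}$ into $H_{c_1}$ --- whatever factorization $z=j_k(\bar z)$ has, however long, its image is automatically a valid factorization of $\bh_\gamma(y)$ --- hence $\bh_\gamma(y)\in H_{c_1}$, and undoing the scaling multiplies all norms by $1/\gamma$, giving $y\in H_{c_1/\gamma}\subset H_{c_2}$ for $\gamma$ chosen with $c_1/\gamma<c_2$, the desired contradiction. Note also that the slack between $c_1$ and $c_2$ is consumed by this $1/\gamma$ expansion, not (as in your plan) by converting a non-strict limit inequality $N\le c_1$ into $N<c_2$.
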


\begin{proof}
Предположим противное. Тогда существуют такие $k\in \mathbb N$ и 
$y\in \clk{H_{c_1}\cap F_k(X, e)}$, что $y\notin H_{c_2}$. Введем на 
пространстве $\widetilde X^k$ метрику $l$: для любых $x=(x_1, \dots, 
x_k)\in \widetilde X^k$ и $x' = (x'_1, \dots, x'_k)\in \widetilde 
X^k$ положим 
$$
l(x, x')=\max \{\bd(x_i, x'_i): i \in \{1, \dots, k\}\}.
$$

Из того, что $y\in \clk{H_{c_1}\cap F_k(X, e)}$, следует, что 
существует точка $\bar y\in j_k^{-1}(y)$ такая, что $l(\bar y, 
j_k^{-1} (F_k(X, e)\cap H_{c_1}))=0$. Пусть $\bar y = (y_1, \dots, 
y_k)$. Введем в пространстве $\widetilde X$ функцию $\norm$: $\|x\|= 
x$, $x\in X$. Пусть $\gamma$ такое, что $0<\gamma< 1$ и $c_1 < 
\gamma< c_2$. Положим $R=\{\|y_i\|, i=1, \dots, k\}\cup \{0\}$, 
$\Delta=\min\{d(a, b), \ a\ne b, \ a, b\in R\}$, $\varepsilon = 
(1-\gamma)\cdot \Delta/2$. Так как $l(\bar y, j_k^{-1}(F_k(X, e)\cap 
H_{c_1}))= 0$, то существует $\bar z\in j_k^{-1}(F_k(X, e)\cap 
H_{c_1})$ такое, что $l(\bar y, \bar z)<\varepsilon$. Пусть $\bar z= 
(z_1, \dots, z_k)$. Обозначим $\mathbb Y = \{0\}\cup \{\|z_i\|, i= 1, 
\dots, k\}$. Зададим отображение $h^*\colon \mathbb Y\to X=[0, 1]$. 
Для $a\in \mathbb Y$ определим $h^*(a)$: если $a=0$, то $h^*(a) = 0$, 
если $a\ne 0$, то существует $i\in \{1, \dots, k\}$ такое, что 
$a=\|z_i\|$. Тогда положим 
$$
h^*(a) = \gamma\|y_i\|.
$$

За счет соответствующего выбора $\varepsilon$ и точки $\bar z$ 
функция $h^*$ определена корректно и является сжимающим 
отображением. Действительно, имеем 
$$
d(h^*(\|z_i\|), h^*(\|z_j\|))= d(\gamma\|y_i\|, \gamma\|y_j\|) = 
\gamma d(\|y_i\|, \|y_j\|).
$$
Но, с другой стороны,
\begin{align*}
d(\|y_i\|, \|y_j\|)&\le 
[d(\|y_i\|, \|z_i\|) + d(\|z_i\|, \|z_j\|) + d(\|z_j\|, 
\|y_j\|)]\le\\
&\le \bd(y_i, z_i) + \bd(y_j, z_j) + d(\|z_i\|, \|z_j\|)\le \\
&\le 2\varepsilon + d(\|z_i\|, \|z_j\|) \le \\
&\le (1-\gamma) d(\|y_i\|, \|y_j\|) + d(\|z_i\|, \|z_j\|), 
\end{align*}
откуда 
$$
\gamma d(\|y_i\|, \|y_j\|) \le d(\|z_i\|, \|z_j\|)
$$
и
$$
d(h^*(\|z_i\|), h^*(\|z_j\|))\le d(\|z_i\|, \|z_j\|),
$$
что и требовалось. По лемме~\ref{lemma1.3} продолжим $h^*$ до 
сжимающего отображения $h\colon [0, 1]\to [0, 1]$. Так как $h(0) = 
0$, то по лемме~\ref{lemma1.2} отображение $h$ можно продолжить до 
сжимающего гомоморфизма $\bh$. Пусть $h_\gamma$ есть отображение: 
$h_\gamma\colon [0, 1] \to [0, 1]$, $h_\gamma(t) = \gamma^t$. 
Очевидно, что $d(h_\gamma(a), h_\gamma(b)) = \gamma d(a, b)$ для 
любых $a, b\in X$. По лемме~\ref{lemma1.2} $h_\gamma$ продолжается до 
гомоморфизма $\bh_\gamma$, для которого если $a, b\in F(X, e)$, то 
$\rho(\bh_\gamma(a), \bh_\gamma(b)) = \gamma\rho(a, b)$. Из 
построения следует, что $\bh(j_k(\bar z)) = \bh_\gamma(j_k(\bar y)) = 
\bh_\gamma(y)$. Положим $j_k(\bar z) = z$. По построению $\bar z\in  
j_k^{-1}(F_k(X, e) \cap H_{c_1})$, и поэтому $z\in H_{c_1}$. Но $\bh$ 
--- сжимающий гомоморфизм, поэтому из леммы~\ref{lemma2.3} следует, 
что $\bh(z)\in H_{c_1}$. Итак, мы получили, что $\bh_\gamma(y)\in 
H_{c_1}$. Учитывая то, что для $x\in F(X, e)$ \ $N(h_\gamma(x)) = 
\gamma N(x)$, имеем: $y\in H_{c_1/\gamma}$. Но $c_1/\gamma< c_2$, а 
для $c'< c_2$ \ $H_{c'}\subset H_{c_2}$, откуда $y\in H_{c_1/\gamma} 
\subset H_{c_2}$. Получили противоречие.

Лемма~\ref{lemma2.4} доказана.
\end{proof}

\begin{lemma}
\label{lemma2.5}
Если\/ $0<c_1 <c_2 < {+\infty }$\textup, то\/ $\overline H_{c_1} 
\subset H_{c_2}$.
\end{lemma}

\begin{proof}
Как показано в лемме~\ref{lemma2.2}, $\overline H_{c_1} = 
[H_{c_1}]\strut^*_{\omega_1}$. Так как $[H_{c_1}]\strut^*_{\omega_1} = \bigcup 
\{[H_{c_1}]\strut_\xi, \xi< \omega_1\}$, то достаточно проверить, что для 
любого $\xi< \omega_1$ имеем $[H_{c_1}]\strut_\xi \subset H_{c_2}$. Этот 
факт докажем по трансфинитной индукции. По лемме~\ref{lemma2.4} 
$[H_{c_1}]\strut_1\subset H_{c_2}$. Пусть для $\xi'< \xi$ и для $0< c'_1 < 
c'_2 < {+\infty }$ доказано, что $[H_{c'_1}]\strut_{\xi'} \subset 
H_{c'_2}$. Пусть $c=\frac {c_1+ c_2}2$. По предположению индукции 
имеем для $\xi' < \xi$: $[H_{c_1}]\strut_{\xi'} \subset H_c$. Так как
$$
[H_{c_1}]\strut^*_\xi = \bigcup \{[H_{c_1}]\strut_{\xi'}: \xi' < \xi\}, 
$$
то, по предположению индукции, получим $[H_{c_1}]\strut^*_\xi\subset H_c$. 
Тогда по лемме~\ref{lemma2.4}: 
$$
[H_{c_1}]\strut_\xi = [[H_{c_1}]\strut^*_\xi]\strut_1\subset [H_c]\strut_1\subset 
H_{c_2}.  
$$

Лемма~\ref{lemma2.5} доказана.
\end{proof}

\section{Переход к дискретным группам}

\begin{lemma}
\label{lemma3.1}
Пусть\/ $G= \langle f_1, \dots, f_m\rangle$\textup, $e$ --- единица 
в\/ $G$.  На множестве\/ $X = \{e, f_1, \dots, f_m\}$ задать
метрику\/ $d$\textup:  
$$
d'(e_i, e_j) = 2\quad \text{при}\quad i\ne j, \qquad 
d'(e_i, e_i) = 0, \qquad d'(e_i, e) = 1
$$
и продолжить метрику\/ $d'$ до инвариантной метрики\/ $\rho'$ по 
Граеву, то\/ $\rho'$ совпадает с\/ $\rho$\textup, где\/ $\rho$ есть 
продолжение по Граеву метрики\/ $d$.
\end{lemma}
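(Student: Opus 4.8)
\emph{Набросок доказательства.} Основным инструментом служит лемма~\ref{lemma1.4} о совпадении двух продолжений по Граеву: чтобы установить равенство $\rho'=\rho$, достаточно проверить, что ограничение каждого из этих продолжений на базис, отвечающий другой метрике, воспроизводит саму эту метрику. Тем самым всё сводится к вычислению нормы $N$ из~$(\star)$ для метрики $d'$ и к двум перекрёстным проверкам её значений на образующих $f_i$ и на произведениях $f_if_j^{-1}$.

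Сначала мы выпишем продолжение $\bd'$ метрики $d'$ на $\widetilde X$ по правилам со стр.~\pageref{p2x}. Поскольку все образующие равноудалены от $e$ на расстояние $1$, а их попарные расстояния равны $2$, метрика $\bd'$ принимает лишь значения из множества $\{0,1,2\}$: имеем $\bd'(a,b)=0$ только при $a=b$, далее $\bd'(a,e)=1$, а во всех остальных случаях $\bd'(a,b)=2$ (в частности, $\bd'(f_i,f_i^{-1})=2$). Отсюда в формуле~$(\star)$ каждое слагаемое $\bd'(x_i,x_{\alpha(i)}^{-1})$ равно $0$ в точности тогда, когда перестановка $\alpha$ спаривает позиции $i$ и $j$ с взаимно обратными буквами $x_i=x_j^{-1}$, и равно $2$ во всех прочих случаях, включая неподвижные точки $\alpha$. Поскольку спаривание двух не взаимно обратных букв даёт в сумме $4$ на две позиции, то есть столько же, сколько две неподвижные точки, минимизирующую перестановку можно без увеличения суммы считать состоящей лишь из спариваний взаимно обратных букв и неподвижных точек. Поэтому минимум в~$(\star)$ определяется наибольшим числом $p$ допустимых, то есть совместимых с условиями множества $\sigma_k$, попарно непересекающихся спариваний взаимно обратных букв, и для несократимой записи длины $k$ получаем $N(x)=k-2p$.

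Найденная формула $N(x)=k-2p$ делает перекрёстные проверки леммы~\ref{lemma1.4} прямым вычислением: для образующей $f_i$ имеем $k=1$, $p=0$ и $N(f_i)=1=d'(f_i,e)$, а для произведения $f_if_j^{-1}$ при $i\ne j$ буквы $f_i$ и $f_j^{-1}$ не являются взаимно обратными, так что $p=0$ и $N(f_if_j^{-1})=2=d'(f_i,f_j)$; аналогичное вычисление для метрики $d$ завершает проверку условий леммы~\ref{lemma1.4}, откуда $\rho=\rho'$. Основная трудность состоит в комбинаторном обосновании самой формулы $N(x)=k-2p$: нужно аккуратно проверить, что условия a)--b) определения $\sigma_k$ действительно отвечают непересекающимся спариваниям и что равенство $\bd'(f_i,f_i^{-1})=2$ не позволяет понизить норму за счёт иных спариваний. Как и в доказательстве леммы~\ref{lemma1.1}, здесь придётся явно строить подходящие перестановки $\beta\in\sigma_k$ и сравнивать соответствующие суммы, последовательно приводя произвольную оптимальную $\alpha$ к виду с максимальным непересекающимся набором взаимно обратных пар.
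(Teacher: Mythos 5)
Ваш общий план --- свести всё к лемме~\ref{lemma1.4}, проверив её условия прямым вычислением нормы из~$(\star)$, --- совпадает с планом статьи, однако в исполнении есть существенный пробел: вы отождествили два различных базиса и в результате проверяете не те равенства. Содержание леммы~\ref{lemma3.1} в том, что $G$ имеет \emph{два} свободных базиса, связанных соотношением $f_i=e_1\dots e_i$ (именно на него ссылается фраза \ruslk по условию\ruspk\ в доказательстве статьи): равномерная метрика $d'$ ($d'(e_i,e_j)=2$, $d'(e_i,e)=1$) задана на базисе $\{e_1,\dots,e_m\}$, а метрика $d$ --- на базисе $\{f_1,\dots,f_m\}$, причём $d(f_i,f_j)=|i-j|$, $d(f_i,e)=i$; такой вид $d$ навязан применением леммы~\ref{lemma3.1} в лемме~\ref{lemma3.4}, где отображение $\varphi(k/m)=f_k$ должно увеличивать метрику отрезка ровно в $m$ раз. В вашем тексте соотношение $f_i=e_1\dots e_i$ не используется вовсе, а $f_i$ трактуются как отдельные буквы того базиса, на котором задана $d'$. Поэтому проверяемые вами равенства $N(f_i)=1=d'(f_i,e)$ и $N(f_if_j^{-1})=2=d'(f_i,f_j)$ лишь подтверждают, что продолжение по Граеву ограничивается на \emph{свой собственный} базис в исходную метрику, --- это уже содержится в лемме~\ref{lemma1.1} и не даёт никакой связи между $\rho$ и $\rho'$; условия леммы~\ref{lemma1.4} остаются непроверенными. Более того, при вашем прочтении (обе метрики одинакового \ruslk равномерного\ruspk\ вида на своих базисах) заключение леммы просто неверно: $\rho'(f_2,e)=N'(e_1e_2)=2$, тогда как $\rho(f_2,e)=1$, то есть продолжения расходятся уже на $f_2$.

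Настоящие перекрёстные проверки, которых требует лемма~\ref{lemma1.4}, таковы (обозначим через $N$ и $N'$ нормы из~$(\star)$, построенные по $d$ и $d'$ соответственно). Во-первых, $\rho'|_X=d$, где $X=\{e,f_1,\dots,f_m\}$: слово $f_if_j^{-1}=e_1\dots e_ie_j^{-1}\dots e_1^{-1}$ имеет длину $i+j$ в базисе $\{e_l\}$, и ваша формула $N'=k-2p$ (она действительно верна для равномерной метрики $d'$) с $p=\min(i,j)$ --- вложенные пары $e_l$, $e_l^{-1}$ допустимы в $\sigma_{i+j}$ --- даёт $N'(f_if_j^{-1})=|i-j|=d(f_i,f_j)$, а не $2$; аналогично $\rho'(f_i,e)=N'(e_1\dots e_i)=i=d(f_i,e)$. Во-вторых, $\rho|_{X'}=d'$, где $X'=\{e,e_1,\dots,e_m\}$: для слова $e_ie_j^{-1}=f_{i-1}^{-1}f_if_j^{-1}f_{j-1}$ спаривание соседних букв даёт $N(e_ie_j^{-1})=\frac12\bigl(2d(f_{i-1},f_i)+2d(f_{j-1},f_j)\bigr)=2=d'(e_i,e_j)$ (перебор остальных перестановок из $\sigma_4$ показывает, что меньше получить нельзя), и $\rho(e_i,e)=N(f_{i-1}^{-1}f_i)=d(f_{i-1},f_i)=1$. Только после этих вычислений лемма~\ref{lemma1.4} даёт $\rho=\rho'$. Справедливости ради, формулировка леммы в статье искажена (выписана лишь $d'$, а соотношение $f_i=e_1\dots e_i$ упомянуто только в доказательстве), так что ваше недоразумение объяснимо; тем не менее предложенное рассуждение не доказывает то утверждение, которое нужно статье.
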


\begin{proof}
По условию $f_i = e_1\dots e_i$. Если воспользоваться 
формулой~$(\star)$ из леммы~\ref{lemma1.1}, то непосредственно 
проверяется, что $\rho'$, ограниченная на $X$, и $\rho$, ограниченная 
на $X'$, совпадают соответственно с $d$ и $d'$. Поэтому можно 
применить лемму~\ref{lemma1.4}.

Лемма~\ref{lemma3.1} доказана.
\end{proof}

\begin{lemma}
\label{lemma3.2}
Пусть\/ $G= \langle e_1, \dots, e_m\rangle$\textup, $e$ --- единица 
в\/ $G$. На множестве\/ $X= \{e, e_1, \dots, e_m\}$ задана метрика\/ 
$d$\textup: $d(e, e_i) = 1$\textup, $d(e_i, e_i) = 0$\textup, $d(e_i, 
e_j) = 2$\textup, $i\ne j$. Пусть\/ $\rho$ --- продолжение метрики\/ 
$d$ по Граеву, $N$ --- из формулы~$(\star)$\textup, $\widetilde X= 
X\cup X^{-1}$. Положим 
$$
U=\{gag^{-1}, g\in G, a\in \widetilde X\}, \qquad V = \{x\in G, N(x) 
< m\}.
$$
Тогда\/ $U^{m-1} = V$\textup, где\/ $U^{m-1} = \{x: x= u_1\dots 
u_{m-1}, u_i\in U\}$.
\end{lemma}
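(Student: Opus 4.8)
The plan is to evaluate the Graev norm $N$ explicitly for this metric and to match it with an algebraic quantity. For $a,b\in\widetilde X\setminus\{e\}$ the definition of $\bd$ gives $\bd(a,b)=0$ if $a=b$ and $\bd(a,b)=2$ otherwise, while $\bd(a,e)=1$. Hence, writing a nontrivial $x\in G$ as its reduced word $x=x_1\dots x_k$ with $x_i\in\widetilde X\setminus\{e\}$, each summand $\bd(x_i,x_{\alpha(i)}^{-1})$ in $(\star)$ is $0$ when $x_i=x_{\alpha(i)}^{-1}$ and $2$ in every other case (fixed points included). Consequently $N$ is integer valued on $G$, with $N(x)=k-2g$, where $g$ is the maximal number of cancelling pairs $\{i,\alpha(i)\}$, $x_i=x_{\alpha(i)}^{-1}$, carried by some $\alpha\in\sigma_k$. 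In particular $N(a)=1$ for $a\in\widetilde X\setminus\{e\}$ and $N(e)=0$, so by the invariance of $N$ (Lemma~\ref{lemma1.1}) every element of $U$ has norm at most $1$. Let $c(x)$ denote the least number of factors needed to write $x$ as a product of conjugates of elements of $\widetilde X\setminus\{e\}$ (with $c(e)=0$). Since $e\in U$ and $U$ is exactly $\{e\}$ together with all such conjugates, one has $U^{m-1}=\{x\in G:\ c(x)\le m-1\}$, while $V=\{x:\ N(x)\le m-1\}$ by integrality; thus it suffices to establish $c(x)\le N(x)$ for all $x$ and, separately, the subadditive bound used below.

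The inclusion $U^{m-1}\subseteq V$ is immediate: for $u_i=g_ia_ig_i^{-1}\in U$, subadditivity and invariance of the norm give $N(u_1\cdots u_{m-1})\le\sum_{i=1}^{m-1}N(u_i)=\sum_{i=1}^{m-1}N(a_i)\le m-1<m$, so $u_1\cdots u_{m-1}\in V$.

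For $V\subseteq U^{m-1}$, I would prove the inequality $c(x)\le N(x)$ by strong induction on the reduced length $k$, exploiting the non-crossing nature of $\sigma_k$. Fix an optimal $\alpha\in\sigma_k$ and examine its action on the first letter $x_1$. If $\alpha(1)=1$, set $v=x_2\dots x_k$; the block $\{2,\dots,k\}$ is $\alpha$-invariant and its induced permutation lies in $\sigma_{k-1}$ after relabelling, so $N(x)\ge 1+N(v)$, and $c(x)\le 1+c(v)\le 1+N(v)\le N(x)$. If $\alpha(1)=t>1$, the defining conditions of $\sigma_k$ forbid any arc of $\alpha$ from crossing $\{1,t\}$, so the remaining positions split into the $\alpha$-invariant blocks $\{2,\dots,t-1\}$ and $\{t+1,\dots,k\}$; with $u=x_2\dots x_{t-1}$ and $v=x_{t+1}\dots x_k$ this gives $x=x_1\,u\,x_t\,v$. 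When $x_t=x_1^{-1}$ the pair cancels, $x=x_1ux_1^{-1}v$, and since conjugation preserves $c$ we get $c(x)\le c(u)+c(v)$ together with $N(x)\ge N(u)+N(v)$; when $x_t\ne x_1^{-1}$ this pair adds $2$ to both sides, $c(x)\le 2+c(u)+c(v)$ and $N(x)\ge 2+N(u)+N(v)$. In each case the induction hypothesis $c(u)\le N(u)$, $c(v)\le N(v)$ yields $c(x)\le N(x)$, completing the induction. Then for any $x\in V$ we obtain $c(x)\le N(x)\le m-1$, i.e.\ $x\in U^{m-1}$.

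I expect the main obstacle to be the verification that restricting an optimal $\alpha\in\sigma_k$ to an $\alpha$-invariant contiguous block, after the order-preserving relabelling, again yields a permutation of the smaller symmetric group lying in $\sigma$. This reduces to checking that the order comparisons appearing in the second defining condition of $\sigma_k$ refer only to the relative order of $i,j,\alpha(i),\alpha(j)$ and are therefore preserved by any order isomorphism of blocks; combined with the non-crossing property that isolates the blocks in the first place, this makes the decomposition legitimate and the induction valid.
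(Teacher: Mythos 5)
Your proof is correct, and for the hard direction it takes a genuinely different route from the paper's. The easy inclusion $U^{m-1}\subseteq V$ is argued exactly as in the paper (invariance of $N$ plus subadditivity). For $V\subseteq U^{m-1}$ both arguments rest on the same two observations --- with this metric every summand $\bd(x_i,x_{\alpha(i)}^{-1})$ over a reduced word equals $0$ (cancelling pair) or $2$ (anything else, fixed points included), and the permutations of $\sigma_k$ are non-crossing --- but the executions differ. The paper works globally: from an optimal $\alpha$ it constructs $\beta\in\sigma_k$ that keeps exactly the cancelling arcs and turns every other index into a fixed point, deduces $N(y)=|\mathbb Y|$ where $\mathbb Y$ is the set of non-cancelled positions, and then writes $y=\rho_0a_1\rho_1\cdots a_{N(y)}\rho_{N(y)}$ with $\rho_0\rho_1\cdots\rho_{N(y)}=e$, which exhibits $y$ as a product of $N(y)\le m-1$ conjugates of letters; the two genuinely non-trivial points there (that $\beta$ again lies in $\sigma_k$, and that the leftover pieces $\rho_i$ multiply out to $e$) are left as assertions that are said to be immediate. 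You instead prove the quantitative inequality $c(x)\le N(x)$ by induction on reduced length, splitting the word along the arc through position~$1$ into the two $\alpha$-invariant blocks it bounds. The combinatorial content is identical --- your block cancellation is precisely the reason the paper's identity $\rho_0\cdots\rho_{N(y)}=e$ holds --- but your inductive organization converts the paper's unproved claims into actual proofs, at the cost of the relabelling verification you flag at the end; that verification is indeed routine, since every clause in the definition of $\sigma_k$ compares only the relative order of $i$, $j$, $\alpha(i)$, $\alpha(j)$, so membership in $\sigma$ is preserved when an invariant block is relabelled by the order isomorphism onto $\{1,\dots,k'\}$, and the crossing analysis with $i=1$ shows the blocks are invariant in the first place. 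In short: the paper's proof is shorter and more explicit about the final factorization; yours is longer but self-contained at exactly the points where the paper hand-waves.
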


\begin{proof}
Пусть $x\in U^{m-1}$. Тогда $x= a_1\dots a_{m-1}$, где $a_i \in U$. 
Из инвариантности нормы $N$ следует, что $N(a_i)\le 1$. Так как $N$ 
--- норма на $G$, то 
$$
N(x) \le \sum_{i=1}^{m-1} N(a_i) \le m-1.
$$
Итак, $U^{m-1}\subset V$. 

Докажем, что $V\subset U^{m-1}$. Метрика $d$ продолжается до метрики 
$\bd$ на $\widetilde X$: $\bd(a, b) = 2$, если $a, b\in \widetilde 
X\setminus \{e\}$ и $a\ne b$; $\bd(a, e) = \bd(e, a) = 1$, $a\in 
\widetilde X\setminus \{e\}$; $\bd(a, a) = 0$, $a\in \widetilde X$. 
Пусть $y\in V$, $y=y_1 \dots y_n$, $y_i\in \widetilde X$. По 
формуле~$(\star)$ 
$$
N(y) = \frac 12\min \Bigl\{\sum_{i=1}^n \bd(y_i, y_{\alpha(i)}^{-1}), 
\alpha\in \sigma_n\Bigr\}.
$$
Так как множество $\sigma_n$ конечно, то существует $\alpha\in 
\sigma_n$ такое, что $N(y) = \frac 12\sum_{i=1}^n \bd(y_i, 
y_{\alpha(i)}^{-1})$. По перестановке $\alpha\in \sigma_n$ построим 
перестановку $\beta\in \sigma_n$. Для $i\in \{1, \dots, n\}$ 
определим $\beta(i)$: 
\begin{itemize}
\item[]
если $\bd(y_i, y_{\alpha(i)}^{-1}) \ne 0$, то $\beta(i)= i$; 
\item[]
если $\bd(y_i, y_{\alpha(i)}^{-1}) = 0$, то $\beta(i)= \alpha(i)$. 
\end{itemize}

Непосредственно проверяется, что $\beta\in \sigma_n$ и 
$$
N(y) = \frac 12\sum_{i=1}^n \bd(y_i, y_{\alpha(i)}^{-1}) = 
\frac 12\sum_{i=1}^n \bd(y_i, y_{\beta(i)}^{-1}).
$$

Положим $\mathbb Y = \{i\in \{1, \dots, n\}, \bd(y_i, 
y_{\beta(i)}^{-1}) \ne 0\}$. Из построения $\beta$ следует, что если 
$i\in \mathbb Y$, то $\beta(i) = i$. Легко проверить, что $N(y) = 
|\mathbb Y|$. Так как $y\in V$, то $N(y) < m$ и $|\mathbb Y| \le 
m-1$.  Ясно, что $y$ представляется в виде: $y= \rho_0a_1\rho_1a_2 
\dots a_{N(y)}\rho_{N(y)}$, где $a_j$ --- $i$-й член множества 
$\{y_i, i\in \mathbb Y\}$, а $\rho_0\dots \rho_{N(y)} = e$. Тогда 
легко видеть, что $y$ можно записать следующим образом:
$$
y= g_1 a_1 g'_1 \dots g_{N(y)} a_{N(y)} g^{-1}_{N(y)}, 
\qquad a_1\in \widetilde X, \quad g_i\in G,
$$
т.е.\ $g_i a_i g_i^{-1} \in U$ и $y\in U^{N(y)}\subset U^{m-1}$.

Лемма~\ref{lemma3.2} доказана.
\end{proof}

\begin{lemma}
\label{lemma3.3}
Пусть\/ $X=[0, 1]$ и\/ $e=0$. Пусть\/ $Q$ --- множество рациональных 
чисел на\/ $[0, 1]$. Группа\/ $F(Q, e)$ естественным образом 
вкладывается в\/ $F(X, e)$. Метрики\/ $d$\textup, $\bd$\textup, 
$\rho$ и норму\/ $N$ возьмем следующим образом: метрику\/ $d$ так же 
как на с.~\pageref{p9} перед леммой~\ref{lemma2.3}, а остальные --- 
как на с.~\pageref{p2} в начале~\S\ref{section1}. Пусть\/ 
$0<c<{+\infty}$. Тогда если\/ $x\in H_c\cap F(Q, e)$, то существуют 
такие\/ $q_1, \dots, q_k\in F(Q, e)$\textup, что\/ $x= q_1^n\dots 
q_k^n$ и\/ $N(q_i)< c$.
\end{lemma}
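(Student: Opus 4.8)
The plan is to start from a concrete factorization of $x$ coming from the definition of $H_c$ and then to rationalize it by a purely algebraic substitution, controlling the norm by continuity. Since $H_c=\gp\{\widetilde H_c\}$, since $(y^n)^{-1}=(y^{-1})^n$ in any group, and since $N(y^{-1})=N(y)$ (so $U_c$ is closed under inversion), every element of $H_c$ is already a product of $n$-th powers of elements of $U_c$. Thus I would first write $x=p_1^n\cdots p_k^n$ with $N(p_i)<c$ for all $i$. Let $P\subset[0,1]$ be the finite set of all point-values occurring as letters of reduced words for $p_1,\dots,p_k$, together with $0$; then each $p_i$ lies in the free subgroup $F(P,0)\subset F(X,e)$. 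Because $x\in F(Q,e)$, its reduced form is a word $b_1\cdots b_l$ whose letters have rational point-values, and since $x$ is obtained from $p_1^n\cdots p_k^n$ by free reduction, these surviving values form a subset $P_x\subset P$ consisting of rationals.

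The key idea is \emph{not} to look for a contracting self-map of $[0,1]$ as in Lemma~\ref{lemma2.4}: a $1$-Lipschitz map fixing two distinct points $a<b$ satisfies $b-a=g(b)-g(a)\le|b-t|+|t-a|=b-a$, forcing $g(t)=t$ on $[a,b]$, so once $x$ has two distinct rational letters no nonconstant contraction survives. Instead I would use functoriality of the free group under an arbitrary relabeling of points. Choose $\psi\colon P\to Q$ with $\psi(0)=0$, with $\psi(p)=p$ for every $p\in P_x$, and with $\psi(p)$ an arbitrary rational close to $p$ for the remaining (in particular the irrational) points of $P$. Extending by $\psi(t^{-1})=\psi(t)^{-1}$, this induces a homomorphism $\Psi\colon F(P,0)\to F(Q,e)$. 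Setting $q_i=\Psi(p_i)\in F(Q,e)$ and applying $\Psi$ to $x=p_1^n\cdots p_k^n$ gives
$$
x=\Psi(x)=\Psi(p_1)^n\cdots\Psi(p_k)^n=q_1^n\cdots q_k^n,
$$
where $\Psi(x)=x$ holds \emph{exactly} because $\psi$ fixes each letter $b_j$ of the reduced word $x$. This step is purely algebraic: the exact equality is built into the homomorphism and needs no smallness of the perturbation, and no injectivity of $\psi$.

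It remains to guarantee $N(q_i)<c$, and here the smallness of $\psi-\mathrm{id}$ enters. Fix $i$, write $p_i=a_1\cdots a_m$ (reduced), and let $\alpha\in\sigma_m$ realize the minimum in $(\star)$ for $p_i$. By Lemma~\ref{lemma1.1} the value $N(q_i)$ may be computed from the length-$m$ representation $\psi(a_1)\cdots\psi(a_m)$ of $q_i$, so the same $\alpha$ yields the upper bound $N(q_i)\le\frac12\sum_{j}\bd(\psi(a_j),\psi(a_{\alpha(j)})^{-1})$. Since $\bd$ is continuous in its point-arguments, the right-hand side depends continuously on $\psi$ and equals $N(p_i)<c$ at $\psi=\mathrm{id}$; hence, choosing every $\psi(p)$ close enough to $p$ (only finitely many strict inequalities $N(p_i)<c$ to preserve), I obtain $N(q_i)<c$ simultaneously for all $i$, which completes the construction.

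The main obstacle, and the point I expect to be tempting to get wrong, is precisely the decoupling just used. One is naturally drawn to realize the substitution by a contracting endomorphism of $F(X,e)$, exactly as in the proof of Lemma~\ref{lemma2.4}; but contractivity on $[0,1]$ is incompatible with fixing the several rational letters of $x$, by the Lipschitz rigidity noted above. The resolution is that exactness of the factorization requires only the algebraic relabeling $\Psi$ — which automatically respects all free cancellations and fixes $x$ — while the inequality $N(q_i)<c$ requires only continuity of the Graev norm $(\star)$ under an arbitrarily small rational perturbation. These are two independent mechanisms, and keeping them separate is what makes the argument go through.
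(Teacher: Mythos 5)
Your proof is correct and takes essentially the same route as the paper: the paper's own argument replaces each distinct irrational letter of the words $p_i$ by a variable and then substitutes nearby rational values — which is precisely your relabeling homomorphism $\Psi$ that fixes the (rational) letters of $x$ — and controls $N(q_i)<c$ by the same continuity of formula $(\star)$ under a small perturbation. The difference is purely presentational (variables and substitution versus an explicitly defined homomorphism), and your phrasing in fact makes the exactness step $\Psi(x)=x$ cleaner than the paper's implicit claim that the word identity survives the replacement by variables.
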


\begin{proof}
Так как $x\in H_c$, то существуют такие $x_1, \dots, x_n\in F(X, e)$, 
что $x=x_1\dots x_n$ и $N(x_i)<c$. Заменим все иррациональные числа в 
словах $x_1, \dots, x_n$ на переменные, причем каждое из них будем 
всякий раз заменять одной и той же переменной. Итак, мы получим, что 
$x\equiv x_1^n(u_1, \dots u_l) \dots x_k^n (u_1, \dots, u_l)$, где 
$x$ --- постоянное, а $x_1$, \dots, $x_k$ зависят от переменных 
$u_1$,\dots, $u_l$. Старые значения функции $x_1(u_1, \dots, u_l)$, 
\dots, $x_k(u_1, \dots, u_l)$ принимают на некотором наборе $u_1^0$, 
\dots, $u_l^0$. Из формулы~$(\star)$ видно, что набор 
$u_1^0$, \dots, $u_l^0$ можно \ruslk пошевелить\ruspk\ до рационального 
набора $u'_1$, \dots, $u'_l$ так, что для $i\in \{1, \dots, n\}$ 
будем иметь 
$$
N(x_i(u'_1, \dots, u'_l))<c.
$$

Положим $q_i = x_i(u'_1, \dots, u'_l)$. 

Лемма~\ref{lemma3.3} доказана.
\end{proof}

\begin{lemma}
\label{lemma3.4}
Пусть даны два условия:
\begin{itemize}
\item[А.] Элемент\/ $(1)^n$ принадлежит $H_1$\textup, где\/ $H_1= H_c$ 
при\/ $c=1$.
\item[Б.] Существует\/ $m\in \mathbb N$ такое, что в\/ $G=\langle 
f_1, \dots, f_m\rangle$ с инвариантной метрикой\/ $\rho$ из 
леммы~\ref{lemma3.1}\/ $f_m^n = a_1^n\dots a_k^n$\textup, где\/ 
$N_m(a_i)< m$ \textup(норма\/ $N_m$ на\/ $G$ совпадает с продолжением 
по Граеву метрики\/ $\rho$\textup).
\end{itemize}

Тогда из \textit{А} следует \textit{Б}.
\end{lemma}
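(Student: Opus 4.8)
The plan is to transfer the (continuous) relation supplied by condition~\textit{А} into the discrete free group $G$ of condition~\textit{Б} by a single homomorphism that rescales the Graev norm by the factor $m$. Since $1\in[0,1]$ is rational, condition~\textit{А} gives $(1)^n\in H_1\cap F(Q,e)$, so Lemma~\ref{lemma3.3} (with $c=1$) produces $q_1,\dots,q_k\in F(Q,e)$ with
\[
(1)^n=q_1^n\cdots q_k^n,\qquad N(q_i)<1.
\]
Only finitely many rational points of $[0,1]$ occur in $q_1,\dots,q_k$; list them together with $0$ and $1$ as $0=p_0<p_1<\dots<p_s=1$, take a common denominator $m$, and write $p_i=r_i/m$ with integers $0=r_0<r_1<\dots<r_s=m$. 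This $m$ is the integer sought in~\textit{Б}. On the free basis $\{p_1,\dots,p_s\}$ define the homomorphism $\phi\colon F(\{p_0,\dots,p_s\},e)\to G=\langle f_1,\dots,f_m\rangle$ by $\phi(p_i)=f_{r_i}$; then $\phi(p_0)=e$ and $\phi((1))=f_m$, and $\phi$ is injective because $\{f_{r_1},\dots,f_{r_s}\}$ is a subset of the free basis $\{f_1,\dots,f_m\}$. Applying $\phi$ to the displayed relation yields $f_m^n=a_1^n\cdots a_k^n$ in $G$ with $a_j:=\phi(q_j)$, which is exactly the factorization demanded by~\textit{Б}; only the estimate $N_m(a_j)<m$ then remains.

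The heart of the argument is the inequality $N_m(\phi(w))\le m\,N(w)$ for every $w\in F(\{p_0,\dots,p_s\},e)$. By Lemma~\ref{lemma3.1} the restriction of $\rho$ to the basis $\{f_1,\dots,f_m\}$ is the integer-line metric, so $\rho(f_r,e)=r$ and $\rho(f_{r_i},f_{r_j})=|r_i-r_j|$. Hence $N'(w):=\tfrac1m N_m(\phi(w))$ is an invariant norm on $F(\{p_0,\dots,p_s\},e)$ — invariance and the triangle inequality pass through $\phi$ from $N_m$, and $N'$ is positive off the identity because $\phi$ is injective — and on the basis it restores the original metric,
\[
N'(p_ip_j^{-1})=\tfrac1m\,\rho(f_{r_i},f_{r_j})=\frac{|r_i-r_j|}{m}=|p_i-p_j|=d(p_i,p_j).
\]
Thus $N'$ is an invariant extension of $d$, so the maximality clause of Lemma~\ref{lemma1.1} gives $N'\le N$, i.e.\ $N_m(\phi(w))\le m\,N(w)$. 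In particular $N_m(a_j)=N_m(\phi(q_j))\le m\,N(q_j)<m$, which establishes~\textit{Б}.

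The one genuine obstacle is this final norm comparison, and the whole construction is arranged around making it work: choosing $m$ to be a common denominator forces $\phi$ to multiply the Graev norm by \emph{exactly} $m$ on the generators, and the maximality half of Lemma~\ref{lemma1.1} then propagates the bound to all words. The remaining points — injectivity of $\phi$, that $N'$ is a bona fide norm, and the value $\rho(f_r,e)=r$ (also checkable directly from $(\star)$ using the explicit metric $d'$ of Lemma~\ref{lemma3.1}) — I expect to be routine.
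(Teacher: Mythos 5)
Your proof is correct, and its skeleton coincides with the paper's: invoke Lemma~\ref{lemma3.3} with $c=1$, take a common denominator $m$ of the rationals involved, and push the relation $(1)^n=q_1^n\cdots q_k^n$ into $G$ by the homomorphism determined by $k/m\mapsto f_k$ (the paper extends it over all of $X_m=\{0,\frac 1m,\dots,\frac mm\}$, you only over the letters that actually occur; that difference is immaterial). The genuine divergence is in how the bound $N_m(\phi(q_i))<m$ is obtained. The paper observes that $\varphi$ multiplies distances between generators by exactly $m$ and concludes, via the homothety clause of Lemma~\ref{lemma1.2}, that the extended homomorphism $\bar\varphi$ multiplies the Graev norm by exactly $m$; strictly speaking, Lemma~\ref{lemma1.2} is formulated only for contracting self-maps $h\colon X\to X$ with $0<\gamma<1$, so this appeal tacitly uses its routine extension to expanding maps between two different pointed sets. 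You instead renormalize, $N'=\frac 1m N_m\circ\phi$, check that $N'$ is an invariant norm (positivity coming from the injectivity of $\phi$, which you rightly secure, since Lemma~\ref{lemma1.1} is stated for metrics rather than pseudometrics) restoring $d$ on $\{p_0,\dots,p_s\}$, and then the maximality clause of Lemma~\ref{lemma1.1} applies verbatim and yields the one-sided inequality $N_m(\phi(w))\le m\,N(w)$ --- which is all the lemma needs, though it forfeits the exact equality (and hence the statement that the ball maps onto the ball) that the paper's route provides. Both arguments also share one unstated but routine identification: the hypothesis $N(q_i)<1$ refers to the Graev norm of $F([0,1],e)$, while your appeal to Lemma~\ref{lemma1.1} bounds $N'$ by the Graev norm of the restricted metric on $F(\{p_0,\dots,p_s\},e)$; that the two coincide on words whose letters lie in $\{p_0,\dots,p_s\}^{\pm 1}$ follows from formula $(\star)$ together with part~I of Lemma~\ref{lemma1.1}, and the paper needs the same fact for $F(X_m,e)$.
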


\begin{proof}
Пусть $(1)^n\in H_1$. Тогда по лемме~\ref{lemma3.3} $(1)^n= 
q_1^n\dots q_k^n$, где $q_i\in F(Q, e)$ и $N(q_i) < c=1$. Так как 
$q_i \in F(Q, e)$, то $q_i = q_{i1}\dots q_{ie_i}$, где $q_{ij}\in 
\widetilde Q = Q\cup Q^{-1}$. Можно положить, что $q_{ij} = 
\left(\frac {p_{ij}}m\right)^{\varepsilon_{ij}}$,  где $p_{ij}\in 
\mathbb N$, а $\varepsilon_{ij}=\pm 1$. Пусть 
$$
X_m = \Bigl\{0, \frac 1m, \dots, \frac mm\Bigr\}.
$$

Группа $F(X_m, e)$ естественным образом вкладывается в группу $F(X, 
e)$, и  на ней индуцируется некоторая метрика группы $F(X, e)$. Пусть 
$\varphi\colon X_m \to \{e, f_1, \dots, f_m\}$, $\varphi\left(\frac 
km\right)= f_k$, $\varphi(0) = e$. Отображение $\varphi$ увеличивает 
метрику в $m$ раз, поэтому продолженный гомоморфизм $\bar\varphi$ 
тоже увеличивает метрику ровно в $m$ раз. Значит, множество $U_1\cap 
F(X_m, e)$ переходит в множество $\{x\in G, N_m(x) < m\}$. Тогда 
$\bar\varphi((1)^n) = f_m^n$ и $f_m^n = {\bar \varphi}^n(q_1)\dots 
{\bar \varphi}^n(q_k)$, где $N_m(\bar \varphi(q_i))<m$. 
Лемма~\ref{lemma3.4} доказана.
\end{proof}

\begin{definition*}
Пусть $G'=\langle e_1, \dots, e_m\rangle $. Для каждого $m$ 
определим 
$$
H^m = \gp\{x^n, x\in V=U^{m-1}\}
\qquad
\text{в $G$}.
$$
\end{definition*}

\begin{lemma}
\label{lemma3.5}
Из условия~Б леммы~\ref{lemma3.4} следует условие
\begin{itemize}
\item{В.}
Существует $m\in \mathbb N$ такое, что $(e_1, \dots, e_m)^n\in H^m$.
\end{itemize}
\end{lemma}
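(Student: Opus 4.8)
План состоит в том, чтобы напрямую связать условие~Б с определением подгруппы $H^m$, опираясь на уже доказанные леммы~\ref{lemma3.1} и~\ref{lemma3.2}; по существу утверждение получается комбинацией этих двух лемм. Ключевым является совпадение норм. В условии~Б норма $N_m$ определена как продолжение по Граеву метрики $d'$, заданной на базисе $\{f_1, \dots, f_m\}$. С другой стороны, подгруппа $H^m = \gp\{x^n, x \in V\}$ и множество $V = \{x \in G, N(x) < m\}$ (по лемме~\ref{lemma3.2} совпадающее с $U^{m-1}$) определены через норму $N$ из формулы~$(\star)$, отвечающую метрике $d$ на базисе $\{e_1, \dots, e_m\}$. По лемме~\ref{lemma3.1} продолжения по Граеву этих двух метрик совпадают, $\rho' = \rho$, а значит, $N_m = N$ как нормы на одной и той же свободной группе $G$.

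Далее я бы воспользовался соотношением между двумя базисами, отмеченным в доказательстве леммы~\ref{lemma3.1}: $f_i = e_1 \dots e_i$, откуда в частности $f_m = e_1 \dots e_m$ и потому $f_m^n = (e_1 \dots e_m)^n$.

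После этого вывод прямой. По условию~Б существует $m$ такое, что $f_m^n = a_1^n \dots a_k^n$, где $N_m(a_i) < m$. В силу совпадения $N_m = N$ имеем $N(a_i) < m$, то есть $a_i \in V$ для всех $i$. Тогда каждый множитель $a_i^n$ принадлежит множеству $\{x^n, x \in V\}$, а их произведение $a_1^n \dots a_k^n$ лежит в порожденной этим множеством подгруппе $H^m$. Подставляя $f_m^n = (e_1 \dots e_m)^n$, получаем $(e_1 \dots e_m)^n \in H^m$, что и есть условие~В.

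Основным (и по существу единственным нетривиальным) местом я считаю аккуратное отождествление нормы $N_m$, построенной по базису $\{f_i\}$ и метрике $d'$, с нормой $N$, построенной по базису $\{e_i\}$ и метрике $d$; именно это отождествление обеспечивается леммой~\ref{lemma3.1}, и без него связь условия~Б с определением $H^m$ (через $N$ и $V$) не была бы прозрачной. Всё остальное сводится к подстановке определений множества $V$, подгруппы $H^m$ и соотношения $f_m = e_1 \dots e_m$.
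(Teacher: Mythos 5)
Ваше доказательство верно и по существу совпадает с авторским: в статье доказательство леммы~\ref{lemma3.5} сведено к одной фразе (\ruslk аналогично лемме~\ref{lemma3.4} с использованием лемм~\ref{lemma3.1} и~\ref{lemma3.2}\ruspk), и Ваш текст раскрывает именно эту схему --- отождествление $N_m=N$ по лемме~\ref{lemma3.1}, включение $a_i\in V=U^{m-1}$ из $N(a_i)<m$ по лемме~\ref{lemma3.2} и определению $H^m$, а также равенство $f_m=e_1\dots e_m$. Расхождений с замыслом статьи и пробелов в рассуждении нет.
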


\begin{proof}
Аналогично лемме~\ref{lemma3.4} с использованием лемм~\ref{lemma3.1} 
и~\ref{lemma3.2}. 
\end{proof}

Пусть $B(m, n) = \langle e_1, \dots, e_m \mid c_1^n=1, \dots, c_2^n = 
1, \dots\rangle $ --- свободная бернсайдова группа, где $c_i$ 
определены так же, как в~\cite{4}.

\begin{lemma}
\label{lemma3.6} 
Систему определяющих соотношений $\{c_i\}_{i=1}^\infty$, 
удовлетворяющую условиям работы~\cite{4}, можно выбрать так, чтобы 
для некоторого $i_0$ \ $c_{i_0}\greq e_1\dots e_m$. 
\end{lemma}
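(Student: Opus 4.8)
The plan is to exhibit $e_1\dots e_m$ as one of the periods from which the relators of $B(m,n)$ are built, exploiting the freedom \cite{4} leaves in choosing the canonical representative of each class of periods in the rank-by-rank construction.

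First I would recall the structure of that construction. The relators $c_i^n=1$ are introduced by induction on rank, and a word $A$ is admitted as an (elementary) period of rank $\alpha$ precisely when it is cyclically reduced, reduced in all lower ranks, and simple in rank $\alpha-1$, i.e.\ not equal in that rank to a proper power of a shorter word. The defining relators are $A^n=1$ as $A$ runs over the periods of all ranks, and at each rank one fixes a single representative from every class of cyclically equal periods. The compatibility conditions imposed in \cite{4} hold for any such choice of representatives, and this is exactly the freedom the statement refers to.

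Next I would verify that $w$, where $w\greq e_1\dots e_m$, is a legitimate period. Since $w$ uses the $m$ distinct generators each exactly once, it is reduced, and it is cyclically reduced because its first and last letters $e_1,e_m$ are distinct free generators, so $e_m\ne e_1^{-1}$. The decisive point is that $w$ contains no nonempty square $A^2$ as a subword, hence a fortiori no $n$-th power $A^n$; consequently no relator of any lower rank can be applied inside $w$, so $w$ is reduced in every rank. For the same reason $w$ is primitive in the free group and cannot become equal to a proper power after lower-rank reductions, so it is simple in rank $m-1$. Thus the cyclic class of $w$ is a class of elementary periods of rank $m$. Finally I would select $w$ itself as the canonical representative of that class; since \cite{4} admits any such selection, the resulting system $\{c_i\}$ still satisfies all required conditions, and for the index $i_0$ attached to this class we obtain $c_{i_0}\greq e_1\dots e_m$.

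I expect the main obstacle to be matching the precise definitions of \emph{period} and of \emph{simple in rank} $\alpha-1$ used in \cite{4}, and confirming that admitting $w$ does not disturb the small-cancellation--type compatibility conditions between distinct periods. This should be harmless exactly because $w$ has no internal periodicity: having pairwise distinct letters, it can overlap any other period only in short initial and terminal pieces, so every cancellation and reducedness condition of \cite{4} is satisfied trivially for $w$, and the chosen system remains admissible.
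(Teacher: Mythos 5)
You have the paper's idea in essence --- insert $w\greq e_1\dots e_m$ into the defining system at the point where words of length $m$ first appear, and exploit the fact that $w$ has pairwise distinct letters (hence contains no square $A^2$, a fortiori no $A^n$). But the decisive step is asserted, not proved. Being \emph{simple in rank $m-1$} (in the paper's formulation: having infinite order in rank $k$, where $k$ is chosen so that $|c_k|<m\le|c_{k+1}|$) is a statement about equality \emph{in the quotient group} of the previous rank, not about the letter pattern of $w$. Squarefreeness of $w$ only shows that no relator occurs as a subword of $w$ itself; it does not exclude that some power $w^l$ becomes trivial in rank $k$, or that $w$ becomes equal in rank $k$ to a proper power of a shorter word, through relator applications to the whole word --- and $w^l$ for $l\ge 2$ is full of squares, so relators can a priori be applied there. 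Your sentences ``so $w$ is reduced in every rank'' and ``cannot become equal to a proper power after lower-rank reductions, so it is simple'' assert exactly what must be established; likewise your final claim that short overlaps make the compatibility conditions hold ``trivially'' conflates graphical overlap of words with equality in the rank-$k$ group.

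The paper closes precisely this gap with the diagram machinery of~\cite{4}: assuming $c_{i_0}^l=1$ in rank $k$, it takes a minimal diagram of rank $k$ with contour label $c_{i_0}^l$ and invokes Lemma~5.5 of~\cite{4} to produce $i\le k$ with $c_i^{n_1}\greq X_1c_{i_0}^{n_2}X_2$, where $n_1>n/6$, $n_2>0$ and $X_1$, $X_2$ are an end and a beginning of $c_{i_0}$; a length count then forces a single copy of $c_{i_0}$ to contain $c_i^{n_3}$ with $n_3>n/12\ge 2$, and only at this point does the distinct-letters property of $e_1\dots e_m$ yield the contradiction. So your combinatorial observation is the right final ingredient, but it becomes usable only after the diagram lemma converts the group-theoretic relation into a graphical containment. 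Note also that in~\cite{4} each $c_{i+1}$ must be length-minimal among words of infinite order in the current rank; the paper verifies this for $c_{i_0}=e_1\dots e_m$ via the choice of $k$, whereas your sketch leaves the placement of $w$ in the sequence, and hence the minimality condition, unexamined.
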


\begin{proof}
Пусть в старой системе $\{c_i\}_{i=1}^\infty$ \ $k\in \mathbb N$ 
таково, что $|c_k|< m$, $c_{k+1}\ge m$. Положим $c'_i = c_i$, $i\le 
k$, $c'_{k+1}= e_1\dots e_m$ и $i_0= k+1$. Докажем, что $c'_{i+0}$ 
удовлетворяет условиям работы~\cite{4}, т.е.\ что $c_{i_0}$ имеет 
бесконечный порядок в ранге $k$ (определение см.\ в~\cite{4}) и 
$c_{i_0}$ --- минимальное по длине слово с таким условием. 

Минимальность $c_{i_0}$ следует из выбора $k$. Пусть 
$c_{i_0}^l\stackrel k = 1$. Тогда существует минимальная 
диаграмма ранга $k$ с меткой контура, равной $c_{i_0}^l$. По 
лемме~5.5 из~\cite{4} существует $i\le k$ такое, что $c_i^{n_1}\greq 
X_1 c_{i_0}^{n_2} X_2$, где $n_1> \frac n6$, $n_2> 0$, $X_1$ и $X_2$ 
--- некоторые конец и начало слова $c_{i_0}$. 
Используя $|X_1c_{i_0}^{n_2}X_2|< |c_i| + |c_{i_0}|$, получим, что 
$c_{i_0}$ содержит $c_i^{n_3}$, $n_3 > \frac n{12}$. Отсюда легко 
получаем, что $c_i\in \langle c_{i_0}\rangle $.  Противоречие. 

Лемма~\ref{lemma3.6} доказана. 
\end{proof}

\begin{definition*}
Пусть $G$ --- некоторая группа. Скажем, что слово $g\in G$ \
$V$-приводимо относительно $X$ в $G$, если 
$$
g^G = g_1a_1 g_1^{-1} \dots g_{m-1}a_{m-1}g_{m-1}^{-1},
$$
где $g_i\in G$, $a_i\in X\cup X^{-1}\cup \{e\}$, $X$ --- базис в $G$. 
В частности, слово $g$ \ $V$-приводимо в свободной группе $G=\langle 
e_1, \dots, e_m\rangle $ тогда и только тогда, когда $g\in V$. 
\end{definition*}

\begin{lemma}
\label{lemma3.7}
$(e_1 \dots e_m)^n\notin H^m$ для любого $m\in \mathbb N$.
\end{lemma}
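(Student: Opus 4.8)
The plan is to argue by contradiction and to separate $(e_1\dots e_m)^n$ from $H^m$ by passing to the Burnside group one rank below $i_0$. Fix $m$ and choose the defining relators as in Lemma~\ref{lemma3.6}, so that $c_{i_0}\greq e_1\dots e_m$; set $k=i_0-1$ and let $\pi\colon G\to G_k:=\langle e_1,\dots,e_m\mid c_1^n,\dots,c_k^n\rangle$ be the canonical epimorphism onto the Burnside group of rank $k$. By Lemma~\ref{lemma3.6} the word $e_1\dots e_m$ has infinite order in rank $k$, so $\pi(c_{i_0})^n\ne 1$ in $G_k$. Hence it suffices to prove $\pi(H^m)=\{1\}$: then $(e_1\dots e_m)^n\in H^m$ would force $\pi(c_{i_0})^n=1$, a contradiction.

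By Lemma~\ref{lemma3.2} the norm $N(x)$ equals the least number of conjugates of generators whose product is $x$, and $V=\{x:N(x)<m\}$. Since $H^m=\gp\{x^n:x\in V\}$ is normal, the required $\pi(H^m)=\{1\}$ is equivalent to the single assertion that every $x$ with $N(x)<m$ satisfies $x^n\stackrel{k}{=}1$. I would derive this from the order dichotomy of \cite{4} in rank $k$: each element of $G_k$ either has finite order dividing $n$, or has infinite order, and it can have infinite order \emph{only if} it is conjugate in rank $k$ to a nonzero power $B^s$ of a period $B$ of rank greater than $k$. Because in the chosen system the relators are non-decreasing in length and $c_{i_0}$ is the first of length $m$, every such period satisfies $|B|\ge m$. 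Thus once we show that no element conjugate to some $B^s$ is $V$-reducible, every $x\in V$ is forced to have finite order in rank $k$, whence $x^n\stackrel{k}{=}1$, and $\pi(H^m)=\{1\}$ follows.

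The heart of the matter, and the step I expect to be the main obstacle, is therefore the estimate that every period $B$ of rank greater than $k$ (so $|B|\ge m$) and every power $B^s$ satisfies $N(B^s)\ge m$, i.e.\ is never $V$-reducible. Unwinding $(\star)$ for the discrete metric $d(e,e_i)=1$, $d(e_i,e_j)=2$ of Lemma~\ref{lemma3.2}, matching a letter with its inverse costs $0$ while every other choice costs the same as leaving it unmatched, so $N(w)=|w|-2\mu(w)$, where $\mu(w)$ is the size of a largest non-crossing pairing of mutually inverse letters of $w$. Consequently $N(B^s)<m$ would require such a pairing to annihilate all but fewer than $m$ of the at least $sm$ letters of $B^s$; equivalently, $B^s$ would have to contain nested inverse segments $u\dots u^{-1}$ of total length exceeding $|B^s|-m$. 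I would rule this out using the regularity of periods established in \cite{4}: a period is cyclically reduced and simple, and in particular contains no long subword together with its inverse, while consecutive copies of $B$ inside $B^s$ abut without cancellation, so the same bound on nested inverse segments persists in $B^s$ and keeps $\mu(B^s)$ small. Turning Adian's simplicity conditions on periods into this explicit lower bound $N(B^s)\ge m$ is exactly the delicate point; granting it, the contradiction above closes the argument.
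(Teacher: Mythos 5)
Your reduction breaks at its central step, and the break is not the ``delicate estimate'' you flag but the step just before it. By your own dichotomy, what you must rule out is that some $x$ with $N(x)<m$ has an image in $G_k$ that is conjugate \emph{in $G_k$} to a power $B^s$ of a period of rank $>k$. Lifting such a conjugacy to the free group gives
$$
x=YB^sY^{-1}\cdot Z_1c_{j_1}^{\pm n}Z_1^{-1}\cdots Z_rc_{j_r}^{\pm n}Z_r^{-1},
\qquad j_1,\dots,j_r\le k,
$$
and the Graev norm $N$ is not invariant under multiplication by products of conjugates of the relators $c_j^{\pm n}$. Hence even a proven estimate $N(B^s)\ge m$ gives no lower bound on $N(x)$: $V$-reducibility in $G_k$ is strictly weaker than $V$-reducibility in the free group, and your non-crossing-pairing analysis of $(\star)$ only controls the latter. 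The only invariants that survive multiplication by relator conjugates are exponent sums modulo $n$, and that is exactly the tool the paper uses; but it separates $V$ from powers of the one special word $c_{i_0}\greq e_1\cdots e_m$ only (each generator occurs in it exactly once, while a product of $m-1$ conjugates of generators must have exponent sum $0$ at some generator, forcing $l\equiv 0\pmod n$), and cannot work for arbitrary periods $B$ of rank $>k$.

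Worse, your intermediate goal $\pi(H^m)=\{1\}$ --- that every $x$ with $N(x)<m$ satisfies $x^n\stackrel{k}{=}1$ --- is not merely unproven but false for $m\ge3$. The set $V$ contains elements $e_1\cdot ge_2g^{-1}$ with arbitrarily long $g$; the group $G_k$ is infinite and not of exponent $n$ (already $c_{i_0}$ has infinite order in it), and a product of two torsion elements with a long generic conjugator has infinite order in rank $k$: such elements satisfy the identity $x^n=1$ only once relators of rank comparable to $|g|$ are imposed, so no fixed rank can absorb all of $H^m$. This is precisely why the paper does not argue inside a fixed rank. Instead it defines $I_1$ as the set of \emph{all} indices $i$ --- arbitrarily large ones included --- such that some $c_i^l$, $0<l<n$, is $V$-reducible in the full group $B(m,n)$; it proves only the single fact $i_0\notin I_1$ by the exponent-sum pigeonhole; and then, to pass from the generators $x^n$ of $H^m$ to the whole subgroup, it invokes the central extension $A(m,n)$ of \cite{5}, whose center is free abelian on $\{c_i^n\}$, and kills the central generators with $i\in I_1$, so that every $b^n$ with $b\in V$ dies while $c_{i_0}^n$ survives by independence. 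Your rank-$k$ shortcut bypasses both mechanisms, and repairing it would essentially force you to rebuild them.
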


\begin{proof}
Для $m=1$ утверждение леммы очевидно. Пусть $m> 1$. Рассмотрим 
свободную бернсайдову группу 
$$
B(m, n) = \langle e_1, \dots, e_m\mid c_1^n= 1, c_2^n = 1, 
\dots\rangle ,
$$
где система $\{c_i\}_{i=1}^\infty $ выбрана так же, как в 
лемме~\ref{lemma3.6}. Рассмотрим разбиение $\mathbb N = I_1\cup I_2$, 
$I_1\cap I_2= \varnothing$, так что $i\in I_1$ тогда и только тогда,  
когда существует $k\in \mathbb N$, $0<k< n$, такое, что $c_i^k$ / 
$V$-приводимо в группе $B(m,n)$. 

Докажем от противного, что $i_0\notin I_1$. 

Пусть $i_0\in I_1$. Обозначим через $f_{e_i}(X)$ сумму показателей 
при $e_i$ в записи слова $X$, $i= 1, \dots, m$. Мы допустили, что 
существует $0< k< n$ такое, что 
$$
c_{i_0}^k = g_1 a_1 g_1^{-1} 
\dots g_{m-1}a_{m-1}g_{m-1}^{-1} 
\qquad\text{в $B(m,n)$},
$$
где 
$g_i\in B(m, n)$, $a_i\in \{e_1, \dots, e_m, e, e_m^{-1}, \dots, 
e_1^{-1}\}$.

Пусть $G'=\langle e_1, \dots, e_m\rangle $. Тогда 
$$
c_{i_0}^k=g_1 a_1 g_1^{-1} \dots g_{m-1}a_{m-1}g_{m-1}^{-1} 
X_1c_{l_1}^{\pm n}X_1^{-1}\dots X_j c_{l_j}^{\pm n}X_j^{-1}
\qquad\text{в $G'$},
$$
и мы имеем для любого $i$ 
\begin{align*}
k&= f_{e_i}(c_{i_0}^k)= \\
&=f_{e_i}(g_1 a_1 g_1^{-1} \dots g_{m-1}a_{m-1}g_{m-1}^{-1} 
X_1c_{l_1}^{\pm n}X_1^{-1}\dots X_j c_{l_j}^{\pm n}X_j^{-1})=\\
&= f_{e_i}(a_1 \dots a_{m-1})+ k_i\cdot n.
\end{align*}
Но очевидно, что существует $1\le i\le m$ такое, что 
$f_{e_i}(a_1\dots a_{m-1})= 0$. Тогда $k\equiv 0 \pmod n$. 
Противоречие. Итак, $i_0\in I_2$.

Как показано в работе~\cite{5}, существует центральное расширение 
$A(m,n)$ группы $B[m,n]$ такое, что 
$$
A(m,n) / \langle c_1^n, c_2^n, \dots\rangle  = B(m,n), 
\qquad
Z(A(m,n)) = \langle c_1^n\rangle \times \langle c_2^n\rangle \times 
\dots
$$
и система $\{c_i^n\}_{i=1}^\infty $ независимо порождает $Z(A(m,n))$ 
как свободную абелеву группу. Рассмотрим группу $A_1= A(m,n) / 
\langle c_i^n, i\in I_1\rangle$. В ней, как следует из~\cite{5}, 
$c_i^{ni} =1$ для всех $i\in I_2$.

Заметим, что если $b\in U^{m-1}$ в $G'$, то $b$ \ $V$-приводимо в 
$G'$. Тогда $b$ \ $V$-приводимо и в $B(m,n)$, т.е.\ 
$b=Xc_i^lX^{-1}$ в $B(m,n)$, где $i\in I_1$, $|l|<n$ (это следует из 
того, что в $B(m,n)$ любое слово сопряжено со степенью $c_k$ для 
некоторого $k\in \mathbb N$ по лемме~4.6 из~\cite{4} и из определения 
множества $I_1$).  Тогда  
$$
b = Xc_i^lX^{-1}Y_1c_{i_1}^{\pm n}Y_1^{-1}\dots Y_kc_{i_k}^{\pm 
n}Y_k^{-1} \qquad\text{в $G'$}.
$$
Но тогда $b= Xc_i^lX^{-1}z_b$ в $A(m,n)$, где $z_b\in Z(A(m,n))$, и 
мы имеем 
$$
b^n= Xc_i^{ln}X^{-1}z_b^n\quad \text{в $A(m,n)$}
\qquad\text{и}\qquad
b^n = 1\quad\text{в группе $A_1$}.
$$

Значит, если $b\in H^m$, то $b\stackrel{\ A_1}=1$. Но из 
независимости системы $\{c_i^n\}_{i=1}^\infty $ вытекает, что 
$c_{i_0}^n\stackrel{\ A_1}=1$. Значит, $c_{i_0}^n\notin H^m$ для 
любого $m\in \mathbb N$.

Лемма~\ref{lemma3.7} доказана.
\end{proof}

\begin{lemma}
\label{lemma3.8}
Элемент\/ $(1)^n$ не содержится в\/ $H_1$.
\end{lemma}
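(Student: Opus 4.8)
План состоит в том, чтобы провести доказательство от противного, сведя предположение $(1)^n\in H_1$ через цепочку импликаций предыдущих лемм к выводу, прямо запрещённому леммой~\ref{lemma3.7}. Ключевое наблюдение в том, что предположение $(1)^n\in H_1$ есть дословно условие~А леммы~\ref{lemma3.4}, так что весь содержательный переход от нормы (продолжения по Граеву) на свободной группе $F(X,e)$ к дискретным бернсайдовым конструкциям уже выполнен в предыдущих леммах, и остаётся лишь аккуратно сцепить готовые утверждения.

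Сначала я предположу противное, то есть $(1)^n\in H_1$, где $H_1=H_c$ при $c=1$. Это в точности условие~А из формулировки леммы~\ref{lemma3.4}, поэтому по этой лемме выполнено условие~Б: найдётся $m\in\mathbb N$, для которого в группе $G=\langle f_1,\dots,f_m\rangle$ с инвариантной метрикой $\rho$ из леммы~\ref{lemma3.1} справедливо $f_m^n=a_1^n\dots a_k^n$ с $N_m(a_i)<m$. Затем я применю лемму~\ref{lemma3.5}: из условия~Б следует условие~В, то есть существование такого $m\in\mathbb N$, что $(e_1\dots e_m)^n\in H^m$. Здесь неявно используются лемма~\ref{lemma3.2} (совпадение множества $V=\{x:N(x)<m\}$ с $U^{m-1}$) и лемма~\ref{lemma3.1} (совпадение метрики на образующих с продолжением по Граеву), благодаря чему $n$-е степени элементов нормы $<m$ оказываются ровно порождающими элементами подгруппы $H^m$.

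Наконец, я получу противоречие: условие~В утверждает, что $(e_1\dots e_m)^n\in H^m$ для некоторого $m$, тогда как лемма~\ref{lemma3.7} гарантирует $(e_1\dots e_m)^n\notin H^m$ для всех $m\in\mathbb N$. Следовательно, предположение $(1)^n\in H_1$ невозможно, что и даёт требуемое. Замечу, что само рассуждение леммы~\ref{lemma3.8} тривиально и сводится к импликациям А$\Rightarrow$Б$\Rightarrow$В; вся содержательная трудность сосредоточена в лемме~\ref{lemma3.7}, где невозможность $V$-приводимости степеней слова $c_{i_0}\greq e_1\dots e_m$ устанавливается через центральное расширение $A(m,n)$ из~\cite{5} и диаграммную технику из~\cite{4}. Таким образом, основным препятствием является не лемма~\ref{lemma3.8}, а обеспечивающий её фундамент в предыдущих леммах.
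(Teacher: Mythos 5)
Ваше доказательство верно и по существу совпадает с доказательством в статье: лемма~\ref{lemma3.8} действительно получается непосредственным сцеплением импликаций А\,$\Rightarrow$\,Б (лемма~\ref{lemma3.4}), Б\,$\Rightarrow$\,В (лемма~\ref{lemma3.5}) и противоречия с леммой~\ref{lemma3.7}. Ваша развёрнутая запись этой цепочки и замечание о том, что вся содержательная трудность сосредоточена в предыдущих леммах, полностью соответствуют авторскому рассуждению.
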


\begin{proof}
Утверждение леммы вытекает из лемм~\ref{lemma3.4}, \ref{lemma3.5} 
и~\ref{lemma3.7}.
\end{proof}

{\def\proofname{Доказательство теоремы}

\begin{proof}
Пусть $X=[0,1]$, $e=0$. Рассмотрим $H_{1/2}$ и положим $B= F(X, e) / 
\overline H_{1/2}$. Естественный гомоморфизм из $F(X, e)$ в $B$ 
обозначим через $\psi$ (взятие фактора допустимо, так как $H_{1/2}$ 
нормальна в $F(X, e)$, а значит, и $\overline H_{1/2}$ нормальна). 
Группа $B$ наделяется фактор-топологией. Тогда $\psi$ является 
открытым отображением. Множество $\psi(U_{1/2})$ является 
окрестностью единицы в группе $B$. Для любого $x\in U_{1/2}$ имеем 
$x^n\in H^{1/2}$, и поэтому для любого $g\in \psi(U_{1/2})$ \ 
$g^n=e$. По лемме~\ref{lemma3.8} $(1)^n\notin H_1$. Предположим, что 
$\psi(1)^n = 1$. Тогда $(1)^n\in \overline H_{1/2}$. По 
лемме~\ref{lemma2.5} получаем, что $(1)^n\in \overline H_{1/2}\subset 
H_1$. Противоречие. Значит, $(1)^n\ne e$ в $B$. 

Группа $B$ связна как непрерывный образ связной группы $F(X, e)$. 

Теорема доказана.
\end{proof}
}


\begin{thebibliography}{0}

\bibitem{1}
Mycielski J., 
On the extension of equalities in connected topological groups, 
\textit{Fund. Math.}
\textbf{44} (3) (1957).

\bibitem{2}
\textit{Коуровская тетрадь. Нерешенные вопросы теории групп},
8-е изд., Новосибирск, 1982.

\bibitem{3}
Граев М.И., 
Топологические группы~I // УМН, т.\,5, \textnumero\,2(36), 1950.

\bibitem{4}
Ольшанский А.Ю., 
О теореме Новикова--Адяна //
Мат. сборник, т.\,118, \textnumero\,2, 1982.

\bibitem{5}
Ашманов И.С., Ольшанский А.Ю.,
Об абелевых и центральных расширениях асферических групп //
Изв. вузов, мат., \textnumero\,11, 1985.

\end{thebibliography}
\end{document}